\newcommand{\Z}{\mathbb{Z}}
\newcommand{\Q}{\mathbb{Q}}
\newcommand{\N}{\mathbb{N}}
\newcommand{\legen}[2]{\genfrac{(}{)}{}{}{#1}{#2}}
\newcommand{\ord}{{\rm ord}}
\newtheorem{thm}{Theorem}[section]
\newtheorem*{thm*}{Theorem}
\newtheorem{cor}[thm]{Corollary}
\newtheorem{conjec}[thm]{Conjecture}
\newtheorem{lem}[thm]{Lemma}
\newtheorem*{ex}{Example}
\newtheorem*{rem}{Remark}
\begin{document}

\title{When is $a^{n} + 1$ the sum of two squares?}


\author[Dresden]{Greg Dresden}
\address{Department of Mathematics, Washington \& Lee University, Lexington, VA 24450}
\email{dresdeng@wlu.edu}

\author[Hess]{Kylie Hess}
\address{Department of Mathematics, Rose-Hulman Institute of Technology, Terre Haute, IN 47803}
\email{hessko@rose-hulman.edu}

\author[Islam]{Saimon Islam}
\address{Department of Mathematics, Washington \& Lee University, Lexington, VA 24450}
\email{islams19@wlu.edu}

\author[Rouse]{Jeremy Rouse}
\address{Department of Mathematics and Statistics, Wake Forest University, Winston-Salem, NC 27109}
\email{rouseja@wfu.edu}

\author[Schmitt]{Aaron Schmitt}
\address{Department of Mathematics, Washington \& Lee University, Lexington, VA 24450}
\email{schmitta18@wlu.edu}

\author[Stamm]{Emily Stamm}
\address{Department of Mathematics and Statistics, Vassar College, Poughkeepsie, NY 12604}
\email{emstamm@vassar.edu}

\author[Warren]{Terrin Warren}
\address{Department of Mathematics, University of Georgia, Athens, GA 30602}
\email{warrentm@uga.edu}

\author[Yue]{Pan Yue}
\address{Department of Mathematics, Washington \& Lee University, Lexington, VA 24450}
\email{pany19@wlu.edu}

\thanks{The second, fourth, sixth and seventh authors were supported by the NSF grant DMS-1461189.}

\subjclass[2010]{Primary 11E25; Secondary 11C08, 11R18}

\begin{abstract}
  Using Fermat's two squares theorem and properties of cyclotomic
  polynomials, we prove assertions about when numbers of the form
  $a^{n}+1$ can be expressed as the sum of two integer squares. We
  prove that $a^n + 1$ is the sum of two squares for all $n \in \N$ if
  and only if $a$ is a perfect square. We also prove that for
  $a\equiv 0,1,2\pmod{4},$ if $a^{n} + 1$ is the sum of two squares,
  then $a^{\delta} + 1$ is the sum of two squares for all
  $\delta | n, \ \delta>1$. Using Aurifeuillian factorization, we show
  that if $a$ is a prime and $a\equiv 1 \pmod{4}$, then there are
  either zero or infinitely many odd $n$ such that $a^n+1$ is the sum
  of two squares. When $a\equiv 3\pmod{4},$ we define $m$ to be the
  least positive integer such that $\frac{a+1}{m}$ is the sum of two
  squares, and prove that if $a^n+1$ is the sum of two squares for any
  odd integer $n,$ then $m | n$, and both $a^m+1$ and $\frac{n}{m}$ are 
sums of two squares.
\end{abstract}

\maketitle


\section{Introduction}


Many facets of number theory revolve around investigating terms of a
sequence that are \emph{interesting}. For example, if
$a_{n} = 2^{n} - 1$ is prime (called a Mersenne prime), then $n$
itself must be prime (Theorem 18 of \cite[p. 15]{HardyAndWright}). In
this case, the property that is interesting is primality. Ramanujan
was interested in the terms of the sequence $a_{n} = 2^{n} - 7$ that
are perfect squares.  He conjectured that the only such terms are
those with $n = 3, 4, 5, 7$ and $15$, and Nagell proved this in 1948
(see \cite{Nagell}; a modern reference is
\cite[p. 96]{StewartTall}). Finally, if the Fibonacci sequence is
defined by $F_{0} = 0$, $F_{1} = 1$ and $F_{n} = F_{n-1} + F_{n-2}$
for $n \geq 2$, then $F_{n}$ is prime if and only if $n$ is prime or
$n = 4$ (Theorem 179 of \cite[p. 148]{HardyAndWright}), and the only
perfect powers in the Fibonacci sequence are $0$, $1$, $8$ and $144$,
which was proven by Bugeaud, Mignotte, and Siksek \cite{BMS} in 2006
using similar tools to the proof of Fermat's Last Theorem.

In this paper, we will consider a number 
to be \emph{interesting} if it can be expressed as the sum of
two squares. The earliest work on this topic 
relates to Pythagorean triples, which are integer solutions to $a^{2} + b^{2} = c^{2}$. Euclid supplied an infinite family of solutions: $a = m^{2} - n^{2}$,
$b = 2mn$ and $c = m^{2} + n^{2}$.

Fermat's two squares theorem classifies which numbers can be
written as the sum of two squares. Fermat claimed to have proven this theorem in his 1640 letter to Mersenne, but never shared the proof. The first published proof is attributed to Euler and was completed in 1749 (see \cite[p. 11]{Cox}).

\begin{thm*}[Fermat's two squares theorem]
A positive integer $N$ can be written as the sum of two squares if and only if in the prime factorization of $N$, 
\[
  N = \prod_{i=1}^{k} p_{i}^{e_{i}},
\]
we have $p_{i} \equiv 3 \pmod{4}$ if and only if $e_{i}$ is even.
\end{thm*}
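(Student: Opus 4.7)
The plan is to reduce the theorem to a statement about prime factors via the two-square multiplication identity, then handle each residue class of primes modulo $4$ in turn. The cornerstone multiplicative tool is the Brahmagupta--Fibonacci identity
\[
(a^{2}+b^{2})(c^{2}+d^{2}) = (ac-bd)^{2} + (ad+bc)^{2},
\]
verified by direct expansion. This shows that the set of sums of two squares is closed under multiplication, which reduces the ``if'' direction to exhibiting a representation of the form $p^{e} = x^{2}+y^{2}$ for each prime power permitted by the hypothesis.

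For the ``if'' direction there are three cases. The prime $2 = 1^{2}+1^{2}$ is immediate. For $p \equiv 3 \pmod{4}$ with $e$ even, we have $p^{e} = (p^{e/2})^{2} + 0^{2}$. The real content lies in the case $p \equiv 1 \pmod{4}$, where I would proceed in two steps. First, since $(\Z/p\Z)^{*}$ is cyclic of order $p-1 \equiv 0 \pmod{4}$, Euler's criterion produces an integer $m$ with $m^{2} \equiv -1 \pmod{p}$. Second, I would apply a Thue-style pigeonhole argument: among the $(\lfloor\sqrt{p}\rfloor+1)^{2} > p$ pairs $(x,y)$ with $0 \le x, y \le \lfloor\sqrt{p}\rfloor$, two must satisfy $x_{1} - my_{1} \equiv x_{2} - my_{2} \pmod{p}$, giving integers $a,b$, not both zero, with $|a|, |b| \le \sqrt{p}$ and $a \equiv mb \pmod{p}$. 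Then
\[
a^{2}+b^{2} \equiv (m^{2}+1)b^{2} \equiv 0 \pmod{p},
\]
and the inequalities $0 < a^{2}+b^{2} < 2p$ force $a^{2}+b^{2} = p$.

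For the converse direction, suppose a prime $p \equiv 3 \pmod{4}$ divides $N = a^{2}+b^{2}$. If $p \nmid b$, then $b$ is invertible modulo $p$ and $(ab^{-1})^{2} \equiv -1 \pmod{p}$, contradicting the fact that $-1$ is a non-residue modulo such a prime; hence $p \mid a$ and $p \mid b$, so $p^{2} \mid N$ and $N/p^{2} = (a/p)^{2}+(b/p)^{2}$ is again a sum of two squares. Inducting on the $p$-adic valuation $v_{p}(N)$ shows that $v_{p}(N)$ must be even whenever $N$ is a sum of two squares, giving the contrapositive of the desired implication.

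The main obstacle is the case $p \equiv 1 \pmod{4}$ of the ``if'' direction: producing a concrete representation $p = a^{2}+b^{2}$ is genuinely nontrivial, combining a residue computation (the existence of a square root of $-1$ modulo $p$) with a pigeonhole/lattice argument. The remaining ingredients are either an algebraic identity, a trivial representation, or a descent driven by the same quadratic-residue fact.
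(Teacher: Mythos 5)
Your proof is correct and complete, but there is essentially nothing in the paper to compare it against: the paper states Fermat's two squares theorem as classical background, attributes the first published proof to Euler, and cites Cox for a modern treatment, without reproducing any argument. What you give is the standard self-contained elementary proof: the Brahmagupta--Fibonacci identity reduces the ``if'' direction to prime powers; the case $p \equiv 1 \pmod{4}$ is handled by extracting a square root $m$ of $-1$ modulo $p$ and running Thue's pigeonhole argument to land a nonzero multiple of $p$ in the disc $a^{2}+b^{2} < 2p$; and the converse is the usual descent using that $-1$ is a quadratic non-residue modulo primes $\equiv 3 \pmod{4}$. The details check out: in the pigeonhole step $|a|,|b| \le \lfloor\sqrt{p}\rfloor < \sqrt{p}$ (strict, since a prime is not a perfect square), so $0 < a^{2}+b^{2} < 2p$ does force $a^{2}+b^{2} = p$, and the descent reduces $v_{p}(N)$ by exactly $2$ at each stage, so induction gives that $v_{p}(N)$ is even. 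One small remark: what you actually prove --- and what the paper uses throughout --- is the criterion ``$N$ is a sum of two squares if and only if every prime $\equiv 3 \pmod{4}$ divides $N$ to an even power''; the paper's literal phrasing ``$p_{i} \equiv 3 \pmod{4}$ if and only if $e_{i}$ is even'' is an infelicity (read literally it would exclude $N = 25 = 3^{2}+4^{2}$, where $5 \not\equiv 3 \pmod{4}$ yet $e_{1}=2$ is even), so you were right to prove the standard one-directional version of the exponent condition.
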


In light of Fermat's theorem, integers that can be expressed as the sum of two squares become increasingly rare.
In particular, if $S(x)$ denotes the number of integers $n \leq x$ that are expressible as a sum of two squares,
then Landau proved \cite{Landau} in 1908 that
\[
  \lim_{x \to \infty} \frac{S(x)}{x / \sqrt{\ln(x)}} = K \approx 0.764.
\]
This can be stated more colloquially as ``the probability that a number $n$ is the sum of two squares is
$\frac{K}{\sqrt{\ln(n)}}$.''

We are interested in which terms in sequences of the form $a^{n} + 1$
can be written as a sum of two squares. In \cite{Curtis}, Curtis
showed that $2^{n}+1$ is the sum of two squares if and only if $n$ is
even or $n = 3$. Additionally, if $n$ is odd and $3^{n}+1$ is the sum
of two squares then $n$ must be the sum of two squares, and $3^{p}+1$
is the sum of two squares for all prime numbers $p|n$.

The focus of the present paper is to say as much as possible about
when $a^{n} + 1$ is the sum of two squares for a general positive
integer $a$.  This paper is the result of two undergraduate research
teams working simultaneously and independently over two months in the
summer of 2016.  The first team, from Wake Forest University,
consisted of students Hess, Stamm, and Warren, and was led by Jeremy
Rouse; the second team, from Washington \& Lee University, consisted
of students Islam, Schmitt, and Yue, and was led by Greg Dresden.
Remarkably, the two teams ended up covering many of the same
topics. Some of the results are unique to the Wake Forest team, while other
results were proved by both teams using different
methods. We carefully assign credit to the theorems in the first section by 
using the tags WF and W\&L in each result, with remarks as necessary.

In the case that $n = 2k$ is even, then
$a^{n} + 1 = \left(a^{k}\right)^{2} + 1^{2}$ is trivially the sum of
two squares. For this reason, we focus on cases when $n$ is odd. Our
first result is the following. 


\begin{thm}[WF]
\label{perfect}
The number $a^{n}+1$ is the sum of two squares for every $n \in \N$ if and only if $a$ is a perfect square.
\end{thm}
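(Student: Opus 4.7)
The forward direction is immediate: if $a = b^2$, then
\[
a^n + 1 = b^{2n} + 1 = (b^n)^2 + 1^2
\]
displays $a^n+1$ as a sum of two squares for every $n \in \N$.

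For the converse I will argue the contrapositive: assuming $a$ is not a perfect square, I will produce a single $n$ such that some prime $q \equiv 3 \pmod 4$ divides $a^n+1$ to an odd power, whereupon Fermat's two squares theorem rules out $a^n+1$ being a sum of two squares. The strategy is first to locate a prime $q$ with $q \equiv 3 \pmod 4$, $q \nmid a$, and $\legen{a}{q} = -1$, and then to invoke the Lifting the Exponent Lemma. To find $q$: since $a$ is not a perfect square, its squarefree part is at least $2$, so by quadratic reciprocity the Legendre symbol $q \mapsto \legen{a}{q}$ is a nontrivial real Dirichlet character (a congruence condition on $q$ modulo $4a$); Dirichlet's theorem on primes in arithmetic progressions then yields infinitely many primes $q$ simultaneously satisfying this congruence and $q \equiv 3 \pmod 4$. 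Fixing such a $q$, the facts that $(q-1)/2$ is odd and $a^{(q-1)/2} \equiv -1 \pmod q$ force the multiplicative order of $a$ modulo $q$ to equal $2m$ for some odd divisor $m$ of $(q-1)/2$; in particular $q \mid a^m+1$.

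The second step will apply LTE with $x = a^m$ and $y = 1$ to obtain
\[
v_q\bigl(a^{mk}+1\bigr) \;=\; v_q\bigl(a^m+1\bigr) + v_q(k) \qquad \text{for every odd } k \geq 1.
\]
Writing $t := v_q(a^m+1) \geq 1$, I will choose $k = 1$ when $t$ is odd and $k = q$ when $t$ is even; the resulting $n := mk$ is odd and $v_q(a^n+1)$ is odd. Hence $a^n+1$ has a prime $\equiv 3 \pmod 4$ to an odd power and is not a sum of two squares by Fermat's theorem. The main obstacle is the existence of the prime $q$: quadratic reciprocity is needed to convert $\legen{a}{q} = -1$ into a congruence condition on $q$, and Dirichlet's theorem is needed to intersect this with $q \equiv 3 \pmod 4$, using essentially that $a$ is a non-square positive integer so that the relevant character is nontrivial. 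The subsequent LTE manipulation is routine.
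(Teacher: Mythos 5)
Your proof is correct and follows essentially the same route as the paper's: both arguments locate a prime $q\equiv 3\pmod{4}$ with $\legen{a}{q}=-1$ and then force an odd power of $q$ into some $a^{n}+1$ by lifting the exponent (the paper's Lemma~\ref{general3}, proved via cyclotomic polynomials, plays exactly the role of your LTE step, and its Lemma~\ref{step1} is your order argument with your odd $m$ replaced by the exponent $\frac{q-1}{2}$). The one step you assert without justification --- that the condition $\legen{a}{q}=-1$ is compatible with $q\equiv 3\pmod{4}$ --- is precisely where the paper's casework (Lemmas~\ref{step3}--\ref{step5}) lives; the claim is true, because a character to a modulus divisible by $4$ that were identically $+1$ on the entire coset $q\equiv 3\pmod{4}$ would be trivial, but your appeal to mere nontriviality of the character quietly relies on this observation and it deserves a line in the final write-up.
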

\begin{ex}
\end{ex}
\begin{enumerate}
    \item If $a=9,$ then $9^n+1=(3^{n})^2+1^2.$ 
    \item If $a=7,$ then there is some odd $n$ such that $7^n+1$ is not the sum of two squares. For example, $7^3+1$ is not the sum of two squares.
\end{enumerate}

Our next result gives specific criteria that handle the case when $a$ is even.

\begin{thm}[WF, W\&L]
\label{even}
Suppose $a$ is even, $n$ is odd, and $a^{n}+1$ is the sum of two squares. Then
\begin{itemize}
     \item If $a+1$ is the sum of two squares, then $a^{\delta}+1$ is the sum of two squares for all $\delta|n$, and
     \item If $a+1$ is not the sum of two squares, then there is a unique prime number \\
    $p \equiv 3 \pmod{4},$ such that $p^{r}||a+1$ for some odd $r$, and $n=p$.
\end{itemize}
\end{thm}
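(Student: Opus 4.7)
The plan is to combine Fermat's two squares theorem with the Lifting the Exponent (LTE) lemma. Since $a$ is even and $n$ odd, $a^n+1$ is odd, and for any $\delta\mid n$ the polynomial factorization
\[
a^n+1=(a^\delta+1)\bigl(a^{n-\delta}-a^{n-2\delta}+\cdots+1\bigr)
\]
shows $a^\delta+1\mid a^n+1$. LTE then gives, for every odd prime $q$ dividing $a^\delta+1$,
\[
v_q(a^n+1)\;=\;v_q(a^\delta+1)+v_q(n/\delta),
\]
and in particular, when $q\mid a+1$, $v_q(a^k+1)=v_q(a+1)+v_q(k)$ for every odd $k$.

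For the first bullet I would argue by contrapositive: given $\delta\mid n$ with $a^\delta+1$ not a sum of two squares, produce a prime $q\equiv 3\pmod{4}$ with $v_q(a^n+1)$ odd. Pick $q\equiv 3\pmod{4}$ dividing $a^\delta+1$ to odd power; by the identity above, $v_q(a^n+1)$ is odd unless $v_q(n/\delta)$ is also odd. The case $q\mid a+1$ is the cleaner one: the hypothesis on $a+1$ forces $v_q(a+1)$ even, so LTE with $\delta=1$ yields $v_q(a^\delta+1)=v_q(a+1)+v_q(\delta)$ (so $v_q(\delta)$ is odd) while $v_q(a^n+1)=v_q(a+1)+v_q(n)$ forces $v_q(n)$ even; these parity constraints are shown to conflict by examining the cyclotomic factor $\Phi_{2q}(a)$, which contributes an extra prime of odd total power to $a^n+1$. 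The case $q\nmid a+1$ is analogous, with $a^e+1$ replacing $a+1$, where $2e=\ord_q(a)$ is the multiplicative order of $a$ modulo $q$.

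For the second bullet, choose any $p\equiv 3\pmod{4}$ with $r=v_p(a+1)$ odd; LTE gives $v_p(a^n+1)=r+v_p(n)$, which is even by hypothesis, so $v_p(n)$ is odd and $p\mid n$. For uniqueness, any second prime $p'$ of the same type similarly divides $n$, and examining $\delta=p$ shows $v_{p'}(a^p+1)=v_{p'}(a+1)$ is odd, so $a^p+1$ is not a sum of two squares; combining this with the divisibility $a^p+1\mid a^n+1$ and comparing parities via LTE produces a contradiction. To show $n=p$, suppose $n$ has an additional prime factor $\ell\neq p$; then the cyclotomic factor $\Phi_{2\ell}(a)$ or $\Phi_{2p\ell}(a)$ dividing $a^n+1$ contributes a primitive prime divisor $q\equiv 3\pmod{4}$ (via a Zsygmondy-type theorem, with attention to the mod-$4$ class and the finite exceptional cases), which provides the obstruction.

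The main technical obstacle in both bullets is the parity-tracking: showing that the bad primes $q\equiv 3\pmod{4}$ dividing $a^\delta+1$ cannot all be absorbed by $v_q(n/\delta)$ in a parity-matching way. The cyclotomic factorization $a^n+1=\prod_{d\mid n}\Phi_{2d}(a)$, together with the fact that $v_q(\Phi_{2eq^j}(a))=1$ for $j\geq 1$ when $2e=\ord_q(a)$, is the auxiliary tool that converts parity statements about $a^\delta+1$ into arithmetic conditions on $n$. Carrying this analysis through uniformly across all divisors $\delta$ of $n$ --- and, in the second bullet, navigating Zsygmondy's exceptional cases --- is the most delicate step of the proof.
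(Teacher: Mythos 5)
Your lifting-the-exponent identity is exactly the paper's Lemma~\ref{general3}, and with it the first half of your second bullet ($v_p(a^n+1)=r+v_p(n)$ even forces $v_p(n)$ odd, so $p\mid n$) is correct. The problem is that all three places where you announce that a contradiction ``is shown'' or ``is produced'' are precisely where the proof has to happen, and none of them closes. In the first bullet you derive $v_q(\delta)$ odd and $v_q(n)$ even; these are perfectly compatible with $\delta\mid n$ (take $v_q(\delta)=1$, $v_q(n)=2$), and the claim that $\Phi_{2q}(a)$ ``contributes an extra prime of odd total power'' is unsupported: when $4\mid a$ one has $\Phi_{2q}(a)=a^{q-1}-a^{q-2}+\cdots+1\equiv 1\pmod{4}$, so nothing forces an obstructing prime into that factor. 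Note also that your argument uses the evenness of $a$ only to observe that $a^n+1$ is odd, yet the heredity claim of the first bullet genuinely fails for $a\equiv 3\pmod{4}$ (which is why Theorem~\ref{3mod4} only propagates to exponents $m\delta$ with $\delta$ itself a sum of two squares); a correct proof must use $a$ even in an essential way. The uniqueness step of the second bullet has the same defect: two primes $p,p'\equiv 3\pmod 4$, each dividing $a+1$ and $n$ to odd powers, is parity-consistent at both primes. For $a=20$, $n=21$, every LTE parity test at $3$ and at $7$ passes, and the actual obstruction is the prime $127$, which divides $\Phi_6(20)=381$ and hence $20^{21}+1$ exactly once --- a valuation comparison at $p$ and $p'$ cannot see it. Finally, the appeal to Zsigmondy is incorrect: a primitive prime divisor $q$ of $\Phi_{2\ell}(a)$ satisfies $q\equiv 1\pmod{2\ell}$, which says nothing about $q$ modulo $4$, so you cannot conclude $q\equiv 3\pmod 4$.

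The missing idea is the paper's mod-$4$ pivot. Let $\delta$ be the \emph{largest} divisor of $n$ with $a^{\delta}+1$ not a sum of two squares, and pick a prime $p\mid n/\delta$, so that $a^{\delta p}+1$ is a sum of two squares by maximality. Lemma~\ref{general1} forces $\gcd\bigl(a^{\delta}+1,\tfrac{a^{\delta p}+1}{a^{\delta}+1}\bigr)=p$, and then $\tfrac{a^{\delta p}+1}{p^{2}}$ is a sum of two squares written as a product of two coprime factors, so $\tfrac{a^{\delta}+1}{p}$ is a sum of two squares. Hence $p\equiv 3\pmod 4$ is the \emph{only} obstructing prime of $a^{\delta}+1$ and $a^{\delta}+1\equiv 3\pmod 4$; since $a$ is even and $\delta$ is odd, this forces $\delta=1$. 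That single computation delivers both bullets at once: if $a+1$ is a sum of two squares there can be no bad divisor at all, and otherwise the only bad divisor is $1$ with a unique obstructing prime $p$, after which $n=p$ follows from Lemma~\ref{general3} (a prime $\ell\neq p$ dividing $n$ would make $\ell>1$ a bad divisor, since $v_p(a^{\ell}+1)=v_p(a+1)$ is odd, and $p^{2}\mid n$ would make $v_p(a^{p^{2}}+1)$ odd).
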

\begin{ex}
\end{ex}  
\begin{enumerate}
          \item  For $a \equiv 2 \pmod{4}$, then $a+1$ is not the sum of two squares and so  there is at most one odd exponent $n$ such that  $a^n+1$ is the sum of two squares. For example, with $a=6$, 
             since $a+1 = 7$ is divisible by the unique prime $p=7 \equiv 3 \pmod{4},$  then $n = 7$ is the only possible odd $n$ for which $a^n+1$ is the sum of two squares. Indeed, $6^{7} + 1 = 476^{2} + 231^{2}$.
     \item For $a \equiv 0 \pmod{4}$, there are more options. If we let $a=20$, then
     since $a+1=3\cdot 7$ has two prime factors $\equiv 3 \pmod{4}$ that divide it to an odd power, we conclude that $20^{n} + 1$ is not the sum of two squares for any odd $n$.
On the other hand, for $a=24$, then since $24^{77}+1$ is the sum of two squares (by observation), we must also have that $24^{11}+1,\ 24^{7}+1$, and $24^{1}+1$ are each the sum of two squares.

\end{enumerate}

Additionally we consider a special case when $a$ is a multiple of $4$. 
\begin{thm}[WF]
\label{4x}
Let $a=4x$ where $x\equiv 3 \pmod{4}$ and $x$ is squarefree. If $n$ is odd, then $a^{nx}+1$ is not the sum of two squares. 
\end{thm}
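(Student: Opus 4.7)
The plan is to use Theorem~\ref{even} to reduce the problem to showing that $(4x)^x+1$ is not a sum of two squares, and then to exhibit a prime $p\equiv 3\pmod 4$ dividing it to an odd power by means of an Aurifeuillean factorization of $\Phi_{2x}(4x)$.

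First, I would suppose $a^{nx}+1$ is a sum of two squares and apply Theorem~\ref{even} to the odd exponent $nx$. If $a+1=4x+1$ is not a sum of two squares, Theorem~\ref{even} forces $nx$ to equal a prime $p\equiv 3\pmod 4$ dividing $a+1$; since $x\ge 3$, this forces $n=1$ and $p=x$, but then $p\mid 4x$, so $p\nmid 4x+1$, a contradiction. Otherwise $a+1$ is a sum of two squares, and Theorem~\ref{even} guarantees $a^\delta+1$ is a sum of two squares for every $\delta\mid nx$; in particular $a^x+1$ must be a sum of two squares. The rest of the argument derives a contradiction from this.

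Next, I would invoke the classical Aurifeuillean factorization: for $x$ squarefree with $x\equiv 3\pmod 4$, there exist integer polynomials $U,V$ with $U$ monic and $U(0)=V(0)=1$ satisfying $\Phi_{2x}(y)=U(y)^2-xy\,V(y)^2$. Substituting $y=4x$ makes the right-hand side a difference of squares: $\Phi_{2x}(4x)=L\cdot M$ with $L=U(4x)+2xV(4x)$ and $M=U(4x)-2xV(4x)$. Since $U(4x)\equiv V(4x)\equiv 1\pmod{4x}$ and $2x\equiv 2\pmod 4$ (because $x\equiv 3\pmod 4$), a short computation gives $L\equiv M\equiv 3\pmod 4$, so each of $L$ and $M$ has a prime divisor $\equiv 3\pmod 4$ appearing to an odd power.

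The main obstacle is proving $\gcd(L,M)=1$, without which the ``bad'' prime of $L$ might cancel against $M$ in the product $LM=\Phi_{2x}(4x)$. Because $\Phi_{2x}$ is squarefree, $U$ and $V$ must be coprime in $\Q[y]$; moreover, any common root of $U,V$ in $\overline{\F_p}$ would force $\Phi_{2x}$ to have a repeated root modulo $p$, which by the standard formula for $\mathrm{disc}(\Phi_{2x})$ can happen only when $p\mid 2x$. Hence $\mathrm{Res}(U,V)$ is supported on primes dividing $2x$. Combined with $L\equiv 1+2x\pmod{4x}$, which gives $\gcd(L,2x)=1$, the chain $\gcd(L,M)\mid\gcd(U(4x),V(4x))\mid\mathrm{Res}(U,V)$ yields $\gcd(L,M)=1$. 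Consequently some prime $p\equiv 3\pmod 4$ divides $\Phi_{2x}(4x)$ to odd power, and since $p\nmid 2x$ the order of $4x$ modulo $p$ is exactly $2x$; hence $p\nmid\Phi_{2d}(4x)$ for any $d\mid x$ with $d<x$, so $v_p(a^x+1)=v_p(\Phi_{2x}(4x))$ is odd, contradicting that $a^x+1$ is a sum of two squares.
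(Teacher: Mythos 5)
Your overall strategy is sound and rests on the same key tool as the paper: split $\Phi_{2x}(4x)$ via the Aurifeuillian identity into two coprime factors, each congruent to $3 \pmod 4$, conclude that $a^{x}+1$ is not the sum of two squares, and then invoke Theorem~\ref{even}. Several of your supporting steps genuinely differ from the paper's, and in places are cleaner. Your reduction applies Theorem~\ref{even} to the base $a$ with exponent $nx$ (disposing of the case where $a+1$ is not a sum of two squares by noting $nx$ would have to equal a prime dividing $4x+1$), whereas the paper first proves $a^{x}+1$ is not a sum of two squares and then applies Theorem~\ref{even} to the base $A=a^{x}$, using $A+1\equiv 1\pmod 4$ to produce two bad primes; both routes work. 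Your resultant/discriminant argument for $\gcd(L,M)=1$ is a nice self-contained replacement for the paper's citation of Theorem 2.7 of Stevenhagen. And your observation that $L\equiv 1+2x$ and $M\equiv 1-2x\pmod{4x}$, hence $\gcd(LM,2x)=1$, is cleaner and stronger than the paper's order-theoretic argument that at most one of the two bad primes can divide $x$.

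The one step you cannot simply assert is the normalization $U(0)=V(0)=1$. It carries the entire weight of the congruence $L\equiv M\equiv 3\pmod 4$: if $V(0)$ were even you would get $L\equiv M\equiv 1\pmod 4$ and learn nothing, and if $U(0)=-1$ (with $U$ monic) the signs likewise break, so this is not a cosmetic convention. The version of the Aurifeuillian factorization the paper quotes (Stevenhagen, Theorem 2.1) supplies only the existence of integer polynomials with $\Phi_{2x}(y)=F(y)^{2}-xyG(y)^{2}$ and says nothing about constant terms, which is why the paper proves both facts. They are true and not hard: comparing constant terms gives $U(0)^{2}=\Phi_{2x}(0)=1$, and the possibility $U(0)=-1$ for monic $U$ is excluded because the two factors would then each have a positive real root while their product $\Phi_{2x}(xy^{2})$ has none; and comparing linear coefficients gives $2U(0)U_{1}-xV(0)^{2}=-\mu(2x)=\pm 1$, forcing $V(0)$ to be odd (which is all your mod-$4$ computation actually needs). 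Add these two short arguments, or cite a source that states the explicit normalization, and your proof is complete.
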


\begin{ex}
\end{ex}
\begin{enumerate}
	\item  Let $a=12 =4\cdot 3$. Then $12^{3n}+1$ is not the sum of two squares for
	any odd $n$. Note that Theorem~\ref{even} implies that since 
$12^{3} + 1$ is not the sum of two squares, then $12^{3n}+1$ is not the sum of two squares for any odd $n$. However, Theorem~\ref{4x} guaranatees, without any computation necessary, that $12^{3} + 1$ is not the sum of two squares.
     \item Let $a=28=4\cdot 7.$ Then $28^{7n}+1$ is not the sum of two squares for any odd $n.$
\end{enumerate}

The factorization tables for $12^{n} + 1$ (\cite{Wagstaff1, Wagstaff2}) imply that there are sixteen exponents $1 \leq n < 293$ for which $12^{n} + 1$ is the sum of two squares, which are all prime except for $n = 1$. For two smallest
composite exponents $n$ for which $12^{n} + 1$ could possibly be the sum
of two squares are $n = 473 = 11 \cdot 43$ and $n = 545 = 5 \cdot 109$;
so far, of those two, we have have confirmed only that $12^{545} + 1$ is the
sum of two squares. 

We now consider the case when $a$ is odd. It's helpful to split this into three
subcases, for $a\equiv 1 \pmod{8}$, 
for $a\equiv 5 \pmod{8}$, 
and  
for $a\equiv 3 \pmod{4}$.

\begin{thm}[WF, W\&L]
\label{1mod8}
Let $a\equiv 1\pmod{8}.$ If $a^n+1$ is the sum of two squares for $n$ odd, then $a^{\delta}+1$ is the sum of two squares for all $\delta|n.$ 
\end{thm}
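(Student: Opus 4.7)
I plan to argue by contradiction, combining the Lifting-the-Exponent (LTE) lemma with a parity count modulo $4$ of ``bad'' prime factors, that is, primes $\equiv 3 \pmod 4$ appearing to an odd power.

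Suppose the conclusion fails, and choose $\delta_0 \mid n$ that is maximal in the divisibility order among divisors with $a^{\delta_0}+1$ not a sum of two squares. Since $a^n+1$ is a sum of two squares, $\delta_0 < n$ strictly, so $n/\delta_0$ has some prime divisor $p$; by maximality of $\delta_0$, the number $a^{p\delta_0}+1$ is a sum of two squares.

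By Fermat's theorem, some prime $q \equiv 3 \pmod 4$ satisfies $v_q(a^{\delta_0}+1)$ odd. Applying LTE at the odd prime $q$ to $(a^{\delta_0})^p+1^p$ yields
\[
v_q\bigl(a^{p\delta_0}+1\bigr) \;=\; v_q(a^{\delta_0}+1) + v_q(p).
\]
If $q \neq p$ then $v_q(p)=0$ and $v_q(a^{p\delta_0}+1)$ is odd, contradicting that $a^{p\delta_0}+1$ is a sum of two squares. Hence every bad prime of $a^{\delta_0}+1$ equals $p$, and since this reasoning applies to any prime dividing $n/\delta_0$, the quotient $n/\delta_0$ has only one distinct prime factor, which is the unique bad prime of $a^{\delta_0}+1$ and is itself $\equiv 3 \pmod 4$.

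Here is where the hypothesis $a \equiv 1 \pmod 8$ enters. Because $\delta_0$ is odd, $a^{\delta_0} \equiv 1 \pmod 8$, so $(a^{\delta_0}+1)/2 \equiv 1 \pmod 4$. For any odd positive integer $m$, one has $m \equiv 1 \pmod 4$ if and only if the number of primes $\equiv 3 \pmod 4$ dividing $m$ to an odd power is even. Applied to $(a^{\delta_0}+1)/2$, this count is forced to be even, yet we have shown it equals exactly one, a contradiction.

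The main obstacle is to pinpoint \emph{where} the hypothesis $a \equiv 1 \pmod 8$ is really needed: the LTE step by itself only reduces the failure of the conclusion to a uniqueness-of-bad-prime statement for $a^{\delta_0}+1$, which is not yet a contradiction. The extra input that $a \equiv 1 \pmod 8$ forces $(a^{\delta_0}+1)/2 \equiv 1 \pmod 4$, and hence an even count of bad primes, is precisely what collides with the uniqueness and closes the argument.
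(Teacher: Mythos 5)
Your proof is correct, and while it uses the same two ingredients as the paper --- a maximal bad divisor $\delta_0$ of $n$, and the observation that $a\equiv 1\pmod 8$ forces $(a^{\delta_0}+1)/2\equiv 1\pmod 4$, hence an \emph{even} number of primes $q\equiv 3\pmod 4$ dividing $a^{\delta_0}+1$ to an odd power --- you arrange the contradiction in the opposite order and with a different valuation tool. The paper first uses the parity to extract \emph{two} bad primes $q_1>q_2$, invokes Lemma~\ref{general3} to see that $q_1\mid n$ (so that $\delta_0 q_1$ is a genuine divisor of $n$), and then uses Theorem~\ref{cyclothm} together with $\ord_{q_2}(a)\le q_2-1<q_1$ to show $q_2$ survives with odd valuation into $a^{\delta_0 q_1}+1$, contradicting maximality. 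You instead fix any prime $p\mid n/\delta_0$ and apply lifting-the-exponent, $v_q(a^{p\delta_0}+1)=v_q(a^{\delta_0}+1)+v_q(p)$ (valid since $q$ is odd, $q\mid a^{\delta_0}+1$, and the exponent $p$ is odd), to conclude that every bad prime of $a^{\delta_0}+1$ must equal $p$; uniqueness of the bad prime then collides with the even-count parity. Your route buys a shorter, self-contained argument: it needs neither the cyclotomic classification of Theorem~\ref{cyclothm} nor the step showing the bad primes divide $n$, and it never has to compare the sizes of two primes. What the paper's version buys is uniformity --- the same $\Phi_{2\delta}$ and order-of-$a$ machinery is reused verbatim in the harder $a\equiv 3\pmod 4$ arguments, whereas LTE would have to be re-derived in the form of Lemma~\ref{general3} anyway for those. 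Both arguments are complete.
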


\begin{ex}
\end{ex}
\begin{enumerate}
     \item Let $a=33.$ Since $33^{119}+1$ is the sum of two squares, then $33^1+1,33^7+1,$ and $33^{17}+1$ must also be sum of two squares. Since $33^3+1$ is not the sum of two squares, we know $33^{3n}+1$ is not the sum of two squares for any odd $n.$
     \item Let $a=41.$ Since $42=2\cdot3\cdot7$ is not the sum of two squares, then $41^1+1$ is not the sum of two squares, and hence 
     $41^n+1$ is not the sum of two squares for any odd $n.$
\end{enumerate}

Note that (as seen in the example with $a=41$) the above theorem implies that if 
$a\equiv 1\pmod{8}$ and 
$a+1$ is not the sum of two squares, then $a^n+1$ is not the sum of two squares for any odd $n.$ The next theorem addresses the case that 
$a \equiv 5 \pmod{8}$. 

\begin{thm}[WF, W\&L]
\label{5}
Let $a\equiv 5 \pmod{8}$. Then,  $a^{n}+1$ is never the sum of two squares for $n$ odd.
\end{thm}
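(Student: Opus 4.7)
The plan is to give a short modular-arithmetic argument, exploiting the fact that $a\equiv 5\pmod 8$ makes $a^n+1$ land in a very restrictive residue class modulo $8$, which is incompatible with being a sum of two squares.

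First I would compute $a^n+1\pmod 8$. Since $a\equiv 5\pmod 8$, we have $a^2\equiv 25\equiv 1\pmod 8$, so for odd $n$ the powers alternate and $a^n\equiv 5\pmod 8$; hence
\[
a^n+1\equiv 6\pmod 8.
\]
Writing $a^n+1 = 2M$, this tells me two things: $v_2(a^n+1)=1$, and $M=(a^n+1)/2$ is an \emph{odd} integer with $M\equiv 3\pmod 4$.

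Next I would observe that any odd positive integer $M$ with $M\equiv 3\pmod 4$ must have at least one prime divisor $p\equiv 3\pmod 4$ occurring to an odd power. This is because if every prime $\equiv 3\pmod 4$ occurred to an even power in $M$, then multiplying the residues of the prime factors would give $M\equiv 1\pmod 4$, a contradiction. Equivalently, the total number (with multiplicity) of prime factors of $M$ lying in the class $3\pmod 4$ is odd, so at least one such prime has odd multiplicity.

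Now I fix such a prime $p\equiv 3\pmod 4$ with $v_p(M)$ odd. Since $p$ is odd, $v_p(a^n+1) = v_p(2M) = v_p(M)$ is odd. By Fermat's two squares theorem, $a^n+1$ cannot be written as a sum of two integer squares. There is no real obstacle here; the argument is just a clean application of Fermat's theorem after the mod $8$ computation pins down the $2$-adic valuation and the residue of the odd part.
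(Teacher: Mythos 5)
Your proposal is correct and follows essentially the same route as the paper: both compute $a^n+1\equiv 6\pmod 8$ for odd $n$ and then apply Fermat's two squares theorem to the odd part $\frac{a^n+1}{2}\equiv 3\pmod 4$. You simply spell out the intermediate step (that an integer $\equiv 3\pmod 4$ must have a prime $\equiv 3\pmod 4$ dividing it to an odd power) which the paper leaves implicit.
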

\begin{ex}
\end{ex}
\begin{enumerate}
     \item Since $13\equiv 5 \pmod{8}$, then $13^n+1$ is not the sum of two squares for any odd $n.$
\end{enumerate}

Finally, we consider $a \equiv 3 \pmod{4}$, as covered in three separate results. 
These first two place considerable restrictions on the values of $n$ for which
$a^n+1$ can be a sum of two squares 


\begin{lem}[WF, W\&L]
\label{oddlem1}
Let $a\equiv 3\pmod{4},$
and let $m$ be the smallest integer such that $\frac{a+1}{m}$ is the sum of 
two squares.  If $a^n+1$ is the sum of two squares, then $n\equiv m \pmod{4}.$
\end{lem}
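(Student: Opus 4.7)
The plan is to reduce the problem to a congruence mod $4$. Writing $S(n) = (a^n+1)/(a+1) = a^{n-1}-a^{n-2}+\cdots-a+1$ for $n$ odd, the substitution $a \mapsto -1$ gives $S(n)\big|_{a=-1} = n$ (each of the $n$ terms collapses to $+1$), so $S(n) \equiv n \pmod{a+1}$. Since $a \equiv 3 \pmod 4$ forces $4 \mid a+1$, this implies the key congruence $S(n) \equiv n \pmod 4$. A quick parity count (an alternating sum of $n$ odd terms with $n$ odd) shows $S(n)$ is odd, so $v_2(a^n+1) = v_2(a+1) =: s$ and the odd part of $a^n+1$ is exactly $\tfrac{a+1}{2^s}\cdot S(n)$.

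Next I would use the following structural fact about sums of two squares, which is immediate from Fermat's criterion: if $N$ is a sum of two squares and $M$ is its odd part, then $M \equiv 1 \pmod 4$. Indeed, every prime $q \equiv 3 \pmod 4$ dividing $N$ appears to even multiplicity, so the contribution of these primes to $M$ is a perfect square of an odd integer, hence $\equiv 1 \pmod 4$; the remaining primes $p \equiv 1 \pmod 4$ already contribute $\equiv 1 \pmod 4$. Applying this to $N = a^n+1$ yields
\[
\frac{a+1}{2^{s}}\cdot S(n) \equiv 1 \pmod 4, \qquad\text{hence}\qquad \frac{a+1}{2^{s}} \cdot n \equiv 1 \pmod 4.
\]

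The last step is to identify $\tfrac{a+1}{2^s} \pmod 4$ with $m$. First I would verify that the $m$ of the lemma is exactly the squarefree product of the primes $\equiv 3 \pmod 4$ occurring to \emph{odd} exponent in $a+1$: to make $(a+1)/m$ a sum of two squares one must remove at least one copy of each such prime, and removing precisely one copy is minimal. Factoring $\tfrac{a+1}{2^s} = m \cdot r^2 \cdot \prod p_i^{e_i}$ (with $r$ the product of primes $\equiv 3 \pmod 4$ with even exponent in $a+1$, and the $p_i \equiv 1 \pmod 4$), both $r^2$ and each $p_i^{e_i}$ are $\equiv 1 \pmod 4$, so $\tfrac{a+1}{2^s} \equiv m \pmod 4$. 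Plugging in gives $m\cdot n \equiv 1 \pmod 4$, and since $m,n$ are both odd, this forces $m \equiv n \pmod 4$.

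The only delicate piece is the interplay of the three mod-$4$ reductions — identifying $m$ with the odd part of $a+1$ modulo $4$, and keeping track of the power of $2$ via the LTE-style equality $v_2(a^n+1) = v_2(a+1)$. Once these are in hand, the argument is a one-line combination using the ``odd part $\equiv 1 \pmod 4$'' lemma for sums of two squares.
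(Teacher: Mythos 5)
Your proof is correct and rests on the same mechanism as the paper's: the odd part of a sum of two squares is $\equiv 1 \pmod{4}$, the odd part of $a^n+1$ equals $\frac{a+1}{2^s}\cdot S(n)$ with $S(n) \equiv n \pmod{4}$, and $m$ is congruent mod $4$ to the odd part of $a+1$ because $m$ is exactly the squarefree product of the primes $\equiv 3 \pmod 4$ dividing $a+1$ to an odd power. The only difference is presentational: the paper writes $a = 4\cdot 2^i\cdot(\text{odd})-1$ and splits into two cases according to the odd factor mod $4$ (Lemmas~\ref{4jplus1} and~\ref{4jplus3}, the first handled by a binomial expansion mod $16\cdot 2^i$), whereas your single factorization $a^n+1 = (a+1)S(n)$ together with the $2$-adic valuation observation treats both cases at once.
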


\begin{thm}[WF, W\&L]
\label{3mod4}
Let $a\equiv 3\pmod{4},$
and let $m$ be the smallest integer such that $\frac{a+1}{m}$ is the sum of 
two squares. If $a^{n} + 1$ is a sum of two squares for some odd $n$, then
\begin{itemize}
      \item $\frac{n}{m}$ is a sum of two squares, and
       \item $a^{m}+1$ is the sum of two squares, and
       \item if $\delta \mid \frac{n}{m}$ and
       $\delta$ is the sum of two squares, then $a^{m \delta} + 1$ is the sum of two squares.
       \item Moreover, if $a^{n p^{2}} + 1$ is the sum of two squares for some $p \equiv 3 \pmod{4}$, then $p | a^{n} + 1$.
\end{itemize}
\end{thm}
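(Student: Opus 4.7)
The plan is to combine Fermat's two squares theorem with the lifting-the-exponent lemma (LTE) and the cyclotomic factorization $a^n+1=\prod_{d\mid n}\Phi_{2d}(a)$ for odd $n$; I abbreviate ``sum of two squares'' as SOS. Fermat's theorem identifies $m$ as the squarefree product of primes $p\equiv 3\pmod{4}$ dividing $a+1$ to an odd power. First I would verify $m\mid n$: for each $p\mid m$, LTE gives $v_p(a^n+1)=v_p(a+1)+v_p(n)$, and since $v_p(a+1)$ is odd while this sum must be even, $v_p(n)$ is odd, hence $p\mid n$; squarefreeness of $m$ then gives $m\mid n$. Set $k=n/m$.

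To prove $a^m+1$ and $k=n/m$ are both SOS (which turn out to be coupled), I would fix a prime $q\equiv 3\pmod{4}$ and control $v_q(a^m+1)$ and $v_q(k)$ simultaneously. When $q\mid a+1$, LTE reduces the question to the parities of $v_q(a+1)$ and $v_q(m)$, which match up by the construction of $m$. For $q\nmid a+1$ with $q\mid a^m+1$, the order of $a$ modulo $q$ equals $2g$ for some $g\mid m$ with $g>1$, and LTE gives $v_q(a^m+1)=v_q(a^g+1)$ and $v_q(a^n+1)=v_q(a^g+1)+v_q(n)$; hence both $v_q(a^m+1)$ even and $v_q(k)=v_q(n)$ even reduce to showing $v_q(n)$ is even. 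The key additional input is the mod~$4$ identity $\Phi_{2d}(a)\equiv\Phi_d(1)\pmod{4}$ for odd $d$ (which follows from $-a\equiv 1\pmod{4}$), yielding $\Phi_{2d}(a)\equiv 1\pmod{4}$ unless $d=p^{\ell}$, in which case $\Phi_{2d}(a)\equiv p\pmod{4}$. Combining this with Lemma~\ref{oddlem1} and the observation that distinct cyclotomic factors $\Phi_{2d}(a)$ are pairwise coprime outside primes dividing the index would let me rule out the ``bad'' configurations in which $v_q(n)$ is odd.

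The third bullet follows by applying the same machinery to $a^{m\delta}+1\mid a^n+1$: since $\delta$ is SOS, every prime $\equiv 3\pmod{4}$ divides $\delta$ to even power, so the parity bookkeeping for $a^{m\delta}+1$ closes in the same way. For the fourth bullet I would use the factorization
\[
a^{np^2}+1 \;=\; (a^n+1)\,\Phi_{2p}(a^n)\,\Phi_{2p^2}(a^n),
\]
whose three factors are pairwise coprime except possibly at $p$. The SOS hypothesis on $a^{np^2}+1$ therefore forces each factor to be SOS at every prime other than $p$. But $a^n\equiv-1\pmod{4}$, so the mod~$4$ identity gives $\Phi_{2p}(a^n)\equiv p\equiv 3\pmod{4}$, meaning the total multiplicity of primes $\equiv 3\pmod{4}$ dividing $\Phi_{2p}(a^n)$ is odd; to reconcile this with SOS-away-from-$p$, the prime $p$ itself must divide $\Phi_{2p}(a^n)$ to odd power, and by a classical criterion on cyclotomic polynomials this is equivalent to $p\mid a^n+1$.

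The main obstacle is the $q\nmid a+1$ case of the second paragraph: in some subcases (for instance $q\mid a$, or $g\nmid n$) the SOS hypothesis on $a^n+1$ imposes no direct constraint on $v_q(n)$, so one must exploit the interplay between different primes $q$, using the global mod~$4$ structure of the cyclotomic factorization together with Lemma~\ref{oddlem1} to rule out the configurations in which $v_q(n)$ would be odd.
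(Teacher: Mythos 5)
Your proposal assembles the right ingredients (valuation bookkeeping via LTE, which plays the role of the paper's Lemma~\ref{general3}; the factorization $a^n+1=\prod_{\delta\mid n}\Phi_{2\delta}(a)$; and the congruence $\Phi_{2d}(a)\equiv\Phi_d(1)\pmod{4}$), and your treatment of $m\mid n$ and of the fourth bullet is sound. Indeed, your fourth-bullet argument, which extracts $p\mid\Phi_{2p}(a^n)$ directly from $\Phi_{2p}(a^n)\equiv 3\pmod{4}$ together with the pairwise coprimality of the factors away from $p$, is a legitimate alternative to the paper's route (the paper instead invokes Lemma~\ref{oddlem1} to see that $a^{np}+1$ is \emph{not} the sum of two squares and then identifies the offending prime with $p$). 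But for the first two bullets --- the heart of the theorem --- you have not closed the argument: you explicitly leave open the case $q\nmid a+1$, where the hypothesis gives no direct handle on $v_q(n)$, and say one ``must exploit the interplay between different primes.'' That interplay is the entire content of the proof, and the missing idea is an extremal argument.

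The paper proceeds as follows: assuming $n/m$ is not the sum of two squares, choose $q$ to be the \emph{largest} prime $\equiv 3\pmod{4}$ with $q^j\parallel n/m$, $j$ odd; one checks $q\nmid m$, so $q^j\parallel n$ and $\Phi_{2q^j}(a)\mid a^n+1$. The congruence $\Phi_{2q^j}(a)\equiv q\equiv 3\pmod{4}$ produces a prime $p\equiv 3\pmod{4}$ dividing $\Phi_{2q^j}(a)$ to an odd power. If $p\ne q$, Theorem~\ref{cyclothm} forces $\ord_p(a)=2q^j$, hence $p\equiv 1\pmod{2q^j}$, so $p>q$ and $p\nmid a+1$; since $a^n+1$ is the sum of two squares, $p$ must then divide $n$ (hence $n/m$) to an odd power, contradicting the maximality of $q$. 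If $p=q$, then $p\mid a+1$ to an even power (as $p\nmid m$), and Lemma~\ref{general3} yields an odd exponent of $p$ in $a^n+1$, again a contradiction. This descent on the largest bad prime is exactly the mechanism that resolves the ``interplay'' you correctly identify as the obstacle; without it (or an equivalent induction) your proof of the first two bullets is incomplete. Note also that once $n/m$ is known to be a sum of two squares, the second bullet follows in one line by the same valuation count, so there is no need to prove the two statements simultaneously as you propose.
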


Theorem~\ref{3mod4} showcases the advantages of having two teams
working independently. When we first shared our results in late July,
the Wake Forest group had only the first two parts of the above
theorem, and the W\&L group had a weaker version of the third part
that was restricted to $m=1$ and to $\delta$ being a prime equivalent
to 1 (mod 4). Two weeks later, both teams had improved their results,
with Wake Forest coming up with both the fourth part and the stronger
version of the third part, as seen here. The proof that resulted from
this collaboration is a nice combination of ideas from both teams.

\begin{ex}
\end{ex}  
\begin{enumerate}
     \item Let $a=11.$ Then $m=3,$ and since $11^{3}+1$ is the sum of two squares, 
     then if $11^n+1$ is the sum of two squares, then $3^{j}||n, \ j$ odd.
     \item Let $a=43.$ Then $m=11,$ and since $43^{11}+1$ is not the sum of two squares, we conclude that 
      $43^{n}+1$ is not the sum of two squares for any odd $n.$
     \item If $a = 4713575$, then $m = 21$. It turns out that $a^{21} + 1$ is the sum of two squares, and so if $a^n+1$ is the sum of two squares, then 
     $21 | n$. Sure enough, $a^{105} + 1$ is the sum of two squares (and has $701$ decimal digits). 
\end{enumerate}


We pause for a moment to remind the reader that Theorem \ref{perfect} states
that if $a$ is not a perfect square, then there exists some odd $n$ such that
$a^n+1$ is not the sum of two squares. We can now extend this theorem
and demonstrate that in fact there will be infinitely many such exponents. 
\begin{itemize} 
\item  If $a$ is even with $a+1$ not the sum of two squares, or if $a \equiv 5 \pmod{8}$,
then Theorems ~\ref{even} and ~\ref{5} tell us that $a^n+1$ fails to be the sum of 
two squares for infinitely many odd $n$ (in fact, for all but at most one
 odd exponent $n$). 
 \item If 
 $a$ is even with $a+1$ the sum of two squares, or if $a \equiv 1 \pmod{8}$,
 then we can use Theorems ~\ref{even} or \ref{1mod8} to state that 
  if $a^\delta+1$ is not the sum of two squares for some odd exponent $\delta$, then
 so also does $a^{\delta N}+1$ fail to be the sum of two squares for all odd
 integers $N$.
\item  Finally, if $ a\equiv 3 \pmod{4}$, we call upon Lemma ~\ref{oddlem1} to state that $a^n+1$ can only be a sum of two squares for $n \equiv m \pmod{4}$.
\end{itemize} 

This next result allows one to state that for certain special values
of $a$, there is an infinite collection of odd values of $n$ for which
$a^{n} + 1$ is the sum of two squares.

\begin{thm}[WF]
\label{px2}
Suppose $n$ is odd, $p\equiv 1 \pmod{4}$ is a prime number and $a=px^{2}$. Then $a^{n}+1$ is the sum of two squares if and only if $a^{np}+1$ is the sum of two squares. 
\end{thm}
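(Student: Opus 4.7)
The plan is to factor $a^{np} + 1 = (a^n + 1)\,\Phi_{2p}(a^n)$ and then show that the second factor is always a sum of two integer squares, after which both directions of the theorem follow from Fermat's two squares theorem. Here $\Phi_{2p}$ denotes the $(2p)$-th cyclotomic polynomial; since $p$ is odd, $\Phi_{2p}(y) = \Phi_p(-y) = y^{p-1} - y^{p-2} + \cdots - y + 1$ and $(y+1)\Phi_{2p}(y) = y^p + 1$, which yields the claimed factorization of $a^{np}+1$ upon setting $y = a^n$.

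To show $\Phi_{2p}(a^n)$ is a sum of two squares, I would invoke the Aurifeuillian identity already used in this paper: for a prime $p \equiv 1 \pmod 4$ there exist integer polynomials $C_p(y), D_p(y)$ with $\Phi_p(y) = C_p(y)^2 - p y \, D_p(y)^2$. Replacing $y$ by $-y$ gives
$$\Phi_{2p}(y) = C_p(-y)^2 + p y \, D_p(-y)^2.$$
Substituting $y = a^n$ with $a = p x^2$ and $n$ odd (so $n+1$ is even), one has $p \cdot a^n = p^{n+1} x^{2n} = \bigl(p^{(n+1)/2} x^n\bigr)^2$, and therefore
$$\Phi_{2p}(a^n) = C_p(-a^n)^2 + \bigl(p^{(n+1)/2} x^n \, D_p(-a^n)\bigr)^2,$$
a manifest sum of two integer squares.

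The ``if'' direction is then immediate: the product of two sums of two squares is a sum of two squares (Brahmagupta--Fibonacci), so $a^n+1$ being a sum of two squares forces $a^{np}+1 = (a^n+1)\Phi_{2p}(a^n)$ to be one as well. For the ``only if'' direction, by Fermat's theorem every prime $q \equiv 3 \pmod 4$ divides $\Phi_{2p}(a^n)$ to an even power, so
$$v_q(a^n+1) \;=\; v_q(a^{np}+1) - v_q(\Phi_{2p}(a^n))$$
has the same parity as $v_q(a^{np}+1)$ for every such $q$. If $a^{np}+1$ is a sum of two squares, then $v_q(a^{np}+1)$ is even for every $q \equiv 3 \pmod 4$, hence so is $v_q(a^n+1)$, and a second application of Fermat's theorem gives that $a^n+1$ is a sum of two squares.

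The main (and essentially only) obstacle is having the Aurifeuillian decomposition $\Phi_p(y) = C_p(y)^2 - p y \, D_p(y)^2$ on hand; this is a classical identity tied to Gauss sums and is exactly the tool the abstract advertises, so I expect it to be isolated as an earlier lemma. Once that ingredient is available, the proof reduces to the one-line factorization $a^{np}+1 = (a^n+1)\Phi_{2p}(a^n)$ together with the parity count on $q$-adic valuations above.
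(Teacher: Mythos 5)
Your proof is correct, and it follows the same basic strategy as the paper -- show that the cofactor $\frac{a^{np}+1}{a^{n}+1}$ is a sum of two squares via an Aurifeuillian identity, then use the Diophantus/Brahmagupta identity for one direction and parity of valuations at primes $q \equiv 3 \pmod{4}$ for the other -- but your decomposition of that cofactor is genuinely different. The paper writes $\frac{a^{np}+1}{a^{n}+1} = \prod_{\delta \mid np,\ \delta \nmid n} \Phi_{2\delta}(a)$ and applies Stevenhagen's identity separately to each factor $\Phi_{2\delta}$, evaluated at the point $x = a = pv^{2}$; there the relevant exponent $q$ attached to $2\delta$ is odd because $\delta$ is odd, which is what makes $-kx^{q} = p^{q+1}v^{2q}$ a perfect square. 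You instead keep the cofactor intact as the single value $\Phi_{2p}(a^{n})$ and apply the identity once, only to $\Phi_{2p}$ (equivalently $\Phi_{p}(y) = C_{p}(y)^{2} - pyD_{p}(y)^{2}$, which is the case $q=1$ of the paper's identity after $y \mapsto -y$), at the large argument $y = a^{n}$; the perfect-square condition $p \cdot a^{n} = p^{n+1}x^{2n} = \left(p^{(n+1)/2}x^{n}\right)^{2}$ now comes from $n$ being odd rather than from $q$ being odd. Your route needs only the simplest instance of the Aurifeuillian identity, while the paper's yields slightly finer information (each cyclotomic factor $\Phi_{2\delta}(a)$ is individually a sum of two squares), in keeping with its other cyclotomic arguments. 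One further point in your favour: your ``only if'' direction, carried out by comparing the parity of $v_{q}(a^{n}+1)$ and $v_{q}(a^{np}+1)$ for $q \equiv 3 \pmod{4}$, makes explicit the valuation argument that the paper's one-sentence converse leaves implicit.
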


The above theorem implies that for those specific values of $a$, then there are either no odd $n$, or an infinite number of odd $n$, for which
$a^{n} + 1$ is the sum of two squares.

\begin{ex}
\end{ex}
\begin{enumerate}
     \item Let $a=17,$ where $p=17$ and $x=1.$ Since $18$ is the sum of two squares, $17^{17^n}+1$ is the sum of two squares for any $n.$
     \item Let $a=117,$ where $p=13$ and $x=3.$ Since $a+1=2\cdot59$ is not the sum of two squares, $117^{13^{n}}+1$ is not the sum of two squares for any $n.$
\end{enumerate}

\begin{rem}
In light of the above theorem, it is natural to ask if there are infinitely many $a \equiv 1 \pmod{8}$
so that $a^{n} + 1$ is the sum of two squares for infinitely many odd $n$. This is indeed the case.
In particular, the main theorem of \cite{Iwaniec} implies that if $x$ is a real number $\geq 17$,
then the number of primes $p \leq x$ with $p \equiv 1 \pmod{8}$ for which $p+1$ is the sum of two squares
is $\geq c \frac{x}{\log(x)^{3/2}}$ for some positive constant $c$.
\end{rem}

We can use the ideas from Theorem~\ref{px2} to 
construct an infinite family of numbers $a$
so that $a^{p} + 1$ is the sum of two squares. This is our
next result.

\begin{thm}[WF]
\label{poly}
If $p \equiv 1 \pmod{4}$ is prime, there is
a degree $4$ polynomial $f(X)$ with integer coefficients
so that $f(X)^{p} + 1 = g(X)^{2} + h(X)^{2}$ for
some $g(X)$ and $h(X)$ with integer coefficients. Moreover,
there is no positive integer $n$ so that $f(n)$ is a perfect square.
\end{thm}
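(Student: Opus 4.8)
The plan is to produce $f$ of the shape $f(X) = p\,w(X)^2$ for a carefully chosen quadratic $w \in \Z[X]$; then $f$ has degree $4$, and since $p$ is squarefree, $f(n) = p\,w(n)^2$ is a perfect square only when $w(n) = 0$, so it will suffice to arrange that $w$ has no positive integer root. To show $f^p + 1$ is a sum of two squares I would factor $f^p + 1 = (f+1)\,\Phi_p(-f)$ (using $\frac{Y^p+1}{Y+1} = \Phi_{2p}(Y) = \Phi_p(-Y)$ for odd $p$), write each factor as a sum of two squares of integer polynomials, and combine them through the Brahmagupta--Fibonacci identity $(A^2+B^2)(C^2+D^2) = (AC-BD)^2 + (AD+BC)^2$.

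The factor $\Phi_p(-f)$ is the easy one, and it is handled exactly as in Theorem~\ref{px2}. The Aurifeuillian factorization supplies integer polynomials $C_p, D_p$ with $\Phi_p(Y) = C_p(Y)^2 - pY\,D_p(Y)^2$; substituting $Y = -f$ and using $pf = (pw)^2$ yields $\Phi_p(-f) = C_p(-f)^2 + \bigl(pw\,D_p(-f)\bigr)^2$, which is already a sum of two squares, for \emph{any} choice of $w$.

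The crux is the remaining factor $f+1 = p\,w^2 + 1$. This is the polynomial analogue of the condition ``$a+1$ is a sum of two squares'' in Theorem~\ref{px2}, which in the integer setting is a genuine restriction; the whole game is therefore to choose $w$ so that $p\,w^2 + 1$ is \emph{identically} a sum of two squares in $\Z[X]$. Here I would invoke Fermat's theorem to write $p = s^2 + t^2$ and observe that $(s,t,1)$ is an isotropic vector of the ternary form $u^2 + v^2 - p\,w^2$. Exploiting this isotropy (for instance by applying an Eichler transformation to the trivial solution $(u,v,w) = (1,0,0)$ of $u^2+v^2-pw^2 = 1$, or simply by exhibiting the identity directly) produces the explicit quadratic
\[
  w = 2psX^2 + 2tX, \qquad u = 2ps^2X^2 - 1, \qquad v = 2pstX^2 + 2pX,
\]
for which a direct expansion confirms $p\,w^2 + 1 = u^2 + v^2$. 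Since $w = 2X(psX + t)$ has only the roots $0$ and $-t/(ps)$, it has no positive integer root, so $f(n) = p\,w(n)^2$ is never a perfect square. Substituting into the factorization gives $f^p + 1 = (u^2+v^2)\bigl(C_p(-f)^2 + (pw\,D_p(-f))^2\bigr)$, and Brahmagupta--Fibonacci then exhibits $f^p+1$ as $g^2 + h^2$ with $g,h \in \Z[X]$.

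I expect the main obstacle to be locating this explicit $w$: the identity $pw^2+1 = u^2+v^2$ cannot hold with $w$ linear (matching the top and constant coefficients forces an impossible relation of the form $p k^2 = 1$), so one is pushed to degree $2$ and, by a reduction mod $p$, to the isotropy of $u^2+v^2-pw^2$ — which is precisely what the representation $p = s^2+t^2$ provides. Once $w$ is in hand, the remainder is the same Aurifeuillian-plus-multiplicativity machinery already used for Theorem~\ref{px2}, together with the squarefreeness of $p$ for the final ``no perfect square'' assertion.
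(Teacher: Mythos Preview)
Your proposal is correct and follows essentially the same route as the paper: set $f = p\cdot(\text{quadratic})^{2}$, split $f^{p}+1 = (f+1)\,\Phi_{2p}(f)$, handle $\Phi_{2p}(f)$ via the Aurifeuillian identity exactly as in Theorem~\ref{px2}, and manufacture the quadratic so that $f+1 = p\,w^{2}+1$ is a sum of two squares using the representation $p = s^{2}+t^{2}$. The paper's explicit choice is $A(X)=\tfrac{u}{2}pX^{2}+vX$ (with $u$ even), yielding $pA^{2}+1 = \bigl(\tfrac{u^{2}}{2}pX^{2}-1\bigr)^{2}+\bigl(\tfrac{uv}{2}pX^{2}+pX\bigr)^{2}$; your $w=2psX^{2}+2tX$ is a minor variant of the same identity. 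One small point in your favor: you actually justify the ``moreover'' clause (via $p$ squarefree and $w$ having no positive integer root), which the paper's proof leaves unaddressed.
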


\begin{ex}
\end{ex}   
\begin{enumerate}
\item If $p = 13$, then $f(X) = 13(13X^{2}+3X)^{2}$.
Then $f(n)^{13} + 1$ is the sum of two squares
for every $n \in \N$.
\end{enumerate}

We end with a conjecture about the number of odd $n$
for which $a^{n} + 1$ is the sum of two squares.

\begin{conjec}[WF]
\label{bigconj}
Suppose $a$ is a positive integer and $a \ne c^{k}$ for any positive integer $c$ and $k > 1$. Let $m$ be the smallest positive so that
$\frac{a+1}{m}$ is the sum of two squares.
\begin{itemize}
\item If $m = 1$, then there are infinitely many odd $n$ so that $a^{n} + 1$ is the sum of two squares. 
\item If $a \equiv 3 \pmod{4}$, $a^{m} + 1$ is the sum of two squares, and $m$ is prime,
then there are infinitely many odd $n$ so that
$a^{n} + 1$ is the sum of two squares. (In fact, there should be infinitely many $p \equiv 1 \pmod{4}$ so that $a^{mp} + 1$ is the sum of two squares.)
\item If $a \equiv 3 \pmod{4}$ and $m$ is composite, then there are only finitely many odd $n$ so that
$a^{n} + 1$ is the sum of two squares.
\end{itemize}
\end{conjec}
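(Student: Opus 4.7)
Since this is a conjecture, the following is a plan of attack rather than a complete route to a proof. The natural division is between the first two bullets, which are existential statements requiring the construction of infinite families, and the third bullet, which is a finiteness statement requiring an obstruction argument. For all three parts, the starting point is the cyclotomic factorization $a^{n} + 1 = \prod_{d \mid 2n,\, d \nmid n} \Phi_{d}(a)$ together with the structural constraints provided by Theorem~\ref{3mod4}, which forces any admissible $n$ in the $a \equiv 3 \pmod{4}$ cases to have the form $m \cdot s$ with $s$ a sum of two squares.

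For the first bullet, my plan is to generalize the Aurifeuillian mechanism of Theorem~\ref{px2} and Theorem~\ref{poly}. When $m = 1$ the factor $a+1$ is already a sum of two squares, so it suffices to produce infinitely many odd $n$ for which the cofactor $(a^{n}+1)/(a+1)$ is also a sum of two squares. For special $a$ of the form $px^{2}$ with $p \equiv 1 \pmod{4}$ this is delivered by Theorem~\ref{px2}; in general, one would need to show that a positive proportion of primes $p$ make $\Phi_{2p}(a)$ free of rational primes $q \equiv 3 \pmod{4}$ appearing to odd multiplicity. The analogous statement for $n=1$ is exactly what Iwaniec~\cite{Iwaniec} supplies, so the hope is that an extension of those sieve techniques handles $\Phi_{2p}(a)$, but this is the main analytic obstacle.

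The second bullet has the same character. With $a \equiv 3 \pmod{4}$, $m$ prime, and $a^{m}+1$ already a sum of two squares, Theorem~\ref{3mod4} reduces the task to exhibiting infinitely many primes $p \equiv 1 \pmod{4}$ for which $\Phi_{2mp}(a)$ is a sum of two squares, since then $a^{mp}+1 = (a^{m}+1)\Phi_{2mp}(a)$ will be as well. The values of $\Phi_{2mp}(X)$ form a thin but structured sequence, and the parenthetical in the conjecture confirms that the desired statement follows from an Iwaniec-style sieve applied to this family; again, the analytic input is the sticking point.

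The third bullet requires a different kind of argument. With $m$ composite, write $m = m_{1} m_{2}$ nontrivially. My plan is to combine the fourth clause of Theorem~\ref{3mod4}, which says that if $a^{nq^{2}}+1$ is a sum of two squares for a prime $q \equiv 3 \pmod{4}$ then $q \mid a^{n}+1$, with the cyclotomic factorization of $a^{ms}+1$. This restricts how the primes $q \equiv 3 \pmod{4}$ appearing in the various $\Phi_{2d}(a)$ for $d \mid ms$ can interact, and the hope is that the compositeness of $m$ forces two incompatible congruence conditions on $s$ that together leave only finitely many sums-of-two-squares $s$ satisfying them all. The main obstacle here will be controlling the $q$-adic valuations of $\Phi_{d}(a)$ uniformly as $d$ ranges, which, although governed by lifting-the-exponent identities, requires delicate case analysis according to whether $q$ divides $a+1$, $a^{m_{1}}+1$, or $a^{m}+1$; and ruling out accidental coincidences for large $s$ may ultimately require an input similar in spirit to the abc conjecture.
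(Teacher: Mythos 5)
The statement you are addressing is a conjecture, and the paper does not prove it; what the paper offers (at the end of Section~\ref{oddsies}) is a probabilistic heuristic. So the right comparison is between your plan of attack and that heuristic. For the first two bullets your reduction is essentially the paper's: factor $a^{mp}+1$ as $(a^{m}+1)\cdot\prod\Phi_{d}(a)$ and ask that the cyclotomic cofactors be sums of two squares, after which the Diophantus identity finishes. The paper then simply assigns each coprime cofactor $c_{d}$ an independent ``probability'' $K/\sqrt{\ln c_{d}}$ \`a la Landau, obtains $\approx p^{-\tau(m)/2}$ for the event that $a^{mp}+1$ is a sum of two squares, and notes that $\sum_{p} p^{-\tau(m)/2}$ diverges exactly when $m=1$ or $m$ is prime. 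Where your plan overreaches is in hoping that an Iwaniec-style sieve can be ``extended'' to $\Phi_{2p}(a)$ or $\Phi_{2mp}(a)$: Iwaniec's theorem concerns $p+1$, a polynomial of fixed degree in the prime $p$, whereas $\Phi_{2p}(a)$ grows exponentially in $p$, so the family is far too sparse for any known sieve. That is why the paper stops at a heuristic; there is no realistic analytic route here, and you should flag the first two bullets as genuinely open rather than as reducible to a known technique.

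The more substantive gap is in your plan for the third bullet. You propose to derive finiteness from an algebraic obstruction --- incompatible congruence conditions on $s$ forced by the compositeness of $m$, via the fourth clause of Theorem~\ref{3mod4}. That clause only constrains exponents of the form $nq^{2}$ with $q\equiv 3\pmod{4}$; it says nothing about exponents $ms$ where $s$ is a squarefree product of primes $\equiv 1\pmod{4}$, which is exactly where an infinitude would live. More importantly, the paper's own example shows no such obstruction can exist: for $a=4713575$ one has $m=21$ composite, yet both $a^{21}+1$ and $a^{105}+1$ are sums of two squares. The conjectured finiteness is a density phenomenon --- the heuristic probability $p^{-\tau(m)/2}$ with $\tau(m)\geq 3$ sums to a convergent series, and likewise the events that $a^{p^{r}}+1$ be a sum of two squares for growing $r$ have summable probabilities --- not the consequence of a contradiction. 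An obstruction argument of the kind you sketch would, if it worked, prove that there are \emph{no} large admissible $n$, which is stronger than the conjecture and already contradicted by the data. Any honest attack on this bullet has to confront the fact that it is a ``probability zero'' assertion about sums of two squares in an exponentially sparse sequence, and no appeal to lifting-the-exponent identities or to $abc$ is known to convert that into a theorem.
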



The main theoretical tools we use in this paper are the theory of cyclotomic polynomials, and in particular, a classification
of which primes divide $\Phi_{n}(a)$ (see Theorem~\ref{cyclothm}). For Theorem~\ref{4x} and Theorem~\ref{px2} also
use the identity $\Phi_{n}(x) = F(x)^{2} - kx^{q} G(x)^{2}$ that arises in Aurifeuillian factorization.

The rest of the paper will proceed as follows. In Section~\ref{back}, we review previous results which we will use. In Section~\ref{niftylemmas}, we prove a few facts that will be used in the remainder of the proofs. In Section~\ref{perfectsquare}, we prove Theorem~\ref{perfect}. In Section ~\ref{evensec}, we prove Theorems ~\ref{even} and ~\ref{4x}. In
 Section~\ref{oddsies}, we prove Theorems 
 ~\ref{1mod8}, ~\ref{5},  and ~\ref{3mod4}, along with Lemma \ref{oddlem1},
  and we include a heuristic supporting Conjecture~\ref{bigconj}. In Section ~\ref{p1mod4}, we prove Theorems ~\ref{px2} and ~\ref{poly}. We conclude with a chart listing all $a \leq 50$ and the first few odd integers $n$ such that $a^{n}+1$ is the sum of two squares, as well as a reference to
one our theorems.



\section{Background}
\label{back}

If $n$ is a positive integer and $p$ is a prime number, we write $p^{r} \| n$ if
$p^{r} | n$ but $p^{r+1} \nmid n$. If $n$ is a positive integer and we write that $n$ is not a sum of two
squares because of the prime $p$, we mean that $p \equiv 3 \pmod{4}$ and there is an odd $r$ so that
$p^{r} \| n$. If $a$ and $m$ are integers with $\gcd(a,m) = 1$, we define
$\ord_{m}(a)$ to be the smallest positive integer $k$ so that
$a^{k} \equiv 1 \pmod{m}$. It is well-known
that $a^{r} \equiv 1 \pmod{m}$ if and only if $\ord_{m}(a) | r$. Fermat's little theorem
states that if $\gcd(a,p) = 1$, then $a^{p-1} \equiv 1 \pmod{p}$; it follows 
that $\ord_{p}(a) | p-1$.

We will make use of the identity (originally
due to Diophantus) that
\[
  (a^{2} + b^{2}) (c^{2} + d^{2})
    = (ac + bd)^{2} + (ad - bc)^{2}.
\]
This applies if the $a, b, c, d \in \Z$, and also
if the $a, b, c$ and $d$ are polynomials.

Let $\Phi_{n}(x)$ denote the $n$th cyclotomic polynomial;
recall that $\Phi_{n}(x)$
is the unique irreducible factor of $x^{n} - 1$ with integer coefficients that does not divide
$x^{k} - 1$ for any proper divisor $k$ of $n$. We have that $\displaystyle \prod_{d | n} \Phi_{d}(x) = x^{n} - 1$
and from this it follows that when $n$ is odd, 
\[
  x^{n} + 1 = \frac{x^{2n} - 1}{x^{n} - 1} = \prod_{\substack{d | 2n \\ d \nmid n}} \Phi_{d}(x) = \prod_{\delta | n} \Phi_{2 \delta}(x).
\]
We will make use of the facts that for $n$ odd,
$\Phi_{2n}(x) = \Phi_{n}(-x)$ and also that
if $n = p^{k}$ is prime, then $\Phi_{p^{k}}(1)
= \lim_{x \to 1} \frac{x^{p^{k}} - 1}{x^{p^{k-1}} - 1} = p$. 

The following theorem classifies prime divisors of $\Phi_{n}(a)$.

\begin{thm}
\label{cyclothm}
Assume that $a \geq 2$ and $n \geq 2$.
\begin{itemize}
\item If $p$ is a prime and $p \nmid n$, then
$p | \Phi_{n}(a)$ if and only if $n= \ord_{p}(a)$. 
\item If $p$ is a prime and $p | n$, then
$p | \Phi_{n}(a)$ if and only if $n = \ord_{p}(a)\cdot p^{k}$. 
In this case, when $n \geq 3$, then 
$p^{2} \nmid \Phi_{n}(a)$.
\end{itemize}
\end{thm}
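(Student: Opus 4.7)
The plan is to use the factorization $a^{n}-1 = \prod_{d \mid n} \Phi_{d}(a)$ together with the observation that $p \mid a^{n}-1$ exactly when $d := \ord_{p}(a)$ divides $n$, to pin down which cyclotomic factor $\Phi_{e}(a)$ absorbs the $p$-divisibility of $a^{n}-1$.

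First, for the case $p \nmid n$: for the ``if'' direction, if $n = d$, then $p$ divides $\prod_{e \mid n} \Phi_{e}(a)$, but $p \mid \Phi_{e}(a)$ for any proper divisor $e$ of $d$ would give $p \mid a^{e}-1$, contradicting the minimality of $d$; thus $p \mid \Phi_{d}(a) = \Phi_{n}(a)$. For ``only if'', assume $p \mid \Phi_{n}(a)$ but $d < n$. Then both $x^{d}-1$ and $\Phi_{n}(x)$ have $(x-a)$ as a factor modulo $p$, and they appear as coprime factors of $x^{n}-1$ in $\Z[x]$ (since $\Phi_{n}(x) \nmid x^{d}-1$). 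Hence $(x-a)^{2}$ divides $x^{n}-1$ modulo $p$, so $a$ is a double root of $x^{n}-1$ modulo $p$. Differentiating gives $p \mid n a^{n-1}$, and since $p \nmid a$, we conclude $p \mid n$, contradicting $p \nmid n$.

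Next, the case $p \mid n$. Write $n = m p^{k}$ with $\gcd(m,p) = 1$ and $k \geq 1$. The idea is to reduce to the first case via the Frobenius endomorphism. From the polynomial identity $\Phi_{m}(x^{p}) = \Phi_{m}(x)\Phi_{mp}(x)$ (valid since $p \nmid m$) and the congruence $\Phi_{m}(x^{p}) \equiv \Phi_{m}(x)^{p} \pmod{p}$ (a consequence of Fermat's little theorem applied coefficient-wise), cancelling the nonzero polynomial $\Phi_{m}(x)$ in the integral domain $\F_{p}[x]$ yields $\Phi_{mp}(x) \equiv \Phi_{m}(x)^{p-1} \pmod{p}$. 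An easy induction on $k$ using $\Phi_{mp^{k}}(x) = \Phi_{mp^{k-1}}(x^{p})$ (valid when $p \mid mp^{k-1}$) upgrades this to $\Phi_{mp^{k}}(x) \equiv \Phi_{m}(x)^{\varphi(p^{k})} \pmod{p}$. Therefore $p \mid \Phi_{n}(a)$ iff $p \mid \Phi_{m}(a)$, and the first case shows this is equivalent to $m = d$, i.e., $n = dp^{k}$.

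Finally, for the non-ramification claim $p^{2} \nmid \Phi_{n}(a)$ when $n = dp^{k} \geq 3$ and $k \geq 1$, the Lifting-the-Exponent lemma is the natural tool. For odd $p$, LTE gives $v_{p}(a^{dp^{k}}-1) = v_{p}(a^{d}-1) + k$. Expanding $a^{dp^{k}}-1 = \prod \Phi_{e}(a)$ over divisors $e$ of $dp^{k}$, the analysis above shows the only factors divisible by $p$ are the $\Phi_{dp^{j}}(a)$ for $0 \leq j \leq k$, with $v_{p}(\Phi_{d}(a)) = v_{p}(a^{d}-1)$. An induction on $k$ then forces $v_{p}(\Phi_{dp^{j}}(a)) = 1$ for each $j \geq 1$. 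The case $p = 2$ is handled using the modified LTE formula $v_{2}(a^{n}-1) = v_{2}(a-1) + v_{2}(a+1) + v_{2}(n) - 1$ (for $a$ odd and $n$ even); the restriction $n \geq 3$ is exactly what excludes the anomalous factor $\Phi_{2}(a) = a+1$, whose $2$-adic valuation can be arbitrarily large. I expect the main obstacle to be this final valuation bookkeeping, particularly keeping track of the $p=2$ boundary case.
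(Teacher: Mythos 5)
Your proof is correct, but note that the paper itself does not prove Theorem~\ref{cyclothm} at all: it is quoted as a known result, with the proof deferred to Roitman (Proposition~2 of \cite{Roitman}) and L\"uneburg (Satz~1 of \cite{Luneburg}). So there is no internal argument to compare against; what you have written is a valid self-contained substitute, and it follows the classical route: the derivative/double-root argument for $p \nmid n$ (if $d = \ord_{p}(a)$ were a proper divisor of $n$, then $x^{d}-1$ and $\Phi_{n}(x)$ would be coprime factors of $x^{n}-1$ both vanishing at $a$ mod $p$, forcing $p \mid n a^{n-1}$), the Frobenius congruence $\Phi_{mp^{k}}(x) \equiv \Phi_{m}(x)^{\varphi(p^{k})} \pmod{p}$ to reduce the case $p \mid n$ to the case $p \nmid n$, and a valuation count via lifting-the-exponent to force $v_{p}(\Phi_{dp^{j}}(a)) = 1$ for $j \geq 1$. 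Two small points you should make explicit in a final write-up, though neither is a real gap: in the ``only if'' direction of the first case you need $d \mid n$ (which follows since $p \mid \Phi_{n}(a)$ implies $p \mid a^{n}-1$) before you can say $x^{d}-1$ divides $x^{n}-1$; and when you invoke ``the first case'' for $\Phi_{m}(a)$ in the reduction, $m$ may equal $1$, a case excluded by the hypothesis $n \geq 2$ but handled directly since $p \mid \Phi_{1}(a) = a-1$ if and only if $\ord_{p}(a) = 1$. Your treatment of the $n \geq 3$ restriction is exactly right: for odd $p$ it is automatic, and for $p = 2$ it serves only to exclude $\Phi_{2}(a) = a+1$, whose $2$-adic valuation is unbounded.
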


This theorem arises in connection with Zsigmondy's work showing that
for any $a, n \geq 2$ there is a prime $p$ for which $\ord_{p}(a) = n$
unless $n = 2$ and $a+1$ is a power of $2$. A proof of
Theorem~\ref{cyclothm} is given in \cite{Roitman} (see Proposition~2),
but Roitman indicates that this theorem was stated and proved earlier
by L\"uneberg (see Satz~1 of \cite{Luneburg}).


We will also make use of certain identities for cyclotomic polynomials
that arise in Aurifeuillian factorization.  If $k$ is a squarefree
positive integer, let $d(k)$ be the discriminant of $\Q(\sqrt{k})$,
that is,
\[
  d(k) = \begin{cases}
    k & \text{ if } k \equiv 1 \pmod{4}\\
    4k & \text{ if } k \equiv 2, 3 \pmod{4}.
  \end{cases}
\]
Suppose that $n\equiv 2 \pmod{4}$, 
and $d(k) \nmid n$ but $d(k) | 2n$. Write the
prime factorization of $n$ as
$\displaystyle n = 2 \prod_{i=1}^{k} p_{i}^{e_{i}}$ and define
$\displaystyle q = \prod_{i=1}^{k} p_{i}^{e_{i}-1}$. Then Theorem~2.1 of \cite{Stevenhagen}
states that
\[
  \Phi_{n}(x) = F(x)^{2} - k x^{q} G(x)^{2}
\]
for some polynomials $F(x), G(x) \in \Z[x]$. 
In the case that $x = -kv^{2}$ for some integer $v$ we get that
\[
  \Phi_{n}(-kv^{2}) = F(-kv^{2})^{2} + \left(k^{\frac{q+1}{2}} v^{q} G(-kv^{2})\right)^{2}
\]
is the sum of two squares. In the case
that $x = kv^{2}$ for some integer $v$, we get a factorization
\begin{align*}
  \Phi_{n}(kv^{2}) &= F(kv^{2})^{2} - k (kv^{2})^{q} G(kv^{2})^{2}\\
  &= \left(F(kv^{2}) + k^{\frac{q+1}{2}} v^{q} G(kv^{2})\right)
  \left(F(kv^{2}) - k^{\frac{q+1}{2}} v^{q} G(kv^{2})\right).
\end{align*}
Theorem 2.7 of \cite{Stevenhagen} states that these two factors
are relatively prime. 

We will also require some basic facts about quadratic residues.
If $p$ is an odd prime, we define $\legen{a}{p}$ to be $1$ if
$\gcd(a,p) = 1$ and there is some $x \in \Z$ so that $x^{2} \equiv a \pmod{p}$.
We define $\legen{a}{p}$ to be $-1$ if $\gcd(a,p) = 1$ and there is no such $x$,
and we set $\legen{a}{p} = 0$ if $p | a$. Euler's criterion gives
the congruence $\legen{a}{p} \equiv a^{\frac{p-1}{2}} \pmod{p}$.
We will also use the law of quadratic
reciprocity, which states that if $p$ and $q$ are distinct odd primes,
then $\legen{p}{q} \legen{q}{p} = (-1)^{\frac{p-1}{2} \cdot \frac{q-1}{2}}$.

The definition of the quadratic residue symbol can be extended. If $n$
is an odd integer with prime factorization $\displaystyle n = \prod_{i=1}^{k} p_{i}^{e_{i}}$,
define $\displaystyle \legen{a}{n} = \prod_{i=1}^{k} \legen{a}{p}^{e_{i}}$.


\section{General Results}
\label{niftylemmas}
The following general lemmas pertain primarily to how the divisors
of $n$ affect the divisors of $a^{n}+1$, and are used in rest of the sections of the paper. 
\begin{lem}
\label{general1}
Let $b,n \in \Z$, and $n$ be odd and suppose $b|x+1$. Then $b|(x^{n-1}-x^{n-2}+x^{n-3}-\cdots+1)$ if and only if $b|n$. 

\end{lem}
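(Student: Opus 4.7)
The plan is to recognize the alternating polynomial $P(x) = x^{n-1} - x^{n-2} + \cdots - x + 1$ (signs work out this way because $n$ is odd, so $n-1$ is even and the constant term is $+1$) as the familiar factor $\frac{x^n+1}{x+1}$, and then reduce the question to evaluating $P$ at $x = -1$ modulo $b$.

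The key observation I would exploit is that when $b \mid x+1$ we have $x \equiv -1 \pmod{b}$, so I would compute $P(-1)$. Writing $P(x) = \sum_{j=0}^{n-1} (-1)^{n-1-j} x^j$ and substituting $x = -1$ gives each term as $(-1)^{n-1-j}(-1)^j = (-1)^{n-1} = 1$ (using that $n$ is odd), yielding $P(-1) = n$. So the content of the claim is exactly that $P(x) \equiv n \pmod{b}$ whenever $b \mid x+1$, and the equivalence $b \mid P(x) \iff b \mid n$ follows immediately.

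To turn this evaluation into a genuine divisibility statement over $\Z$, I would invoke the polynomial identity $P(x) - P(-1) = (x+1)\,Q(x)$ for some $Q(x) \in \Z[x]$, which holds because $-1$ is a root of the integer polynomial $P(x) - n$ (alternatively, each $x^j - (-1)^j$ is divisible by $x+1$ in $\Z[x]$, and $P(x) - n$ is an integer linear combination of these). Substituting any integer value of $x$ with $b \mid x+1$ gives $b \mid P(x) - n$, completing both directions.

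I do not expect any real obstacle here: the only thing to be a bit careful about is the sign pattern in $P(x)$, which depends on the parity of $n$, and the clean factorization $x^n + 1 = (x+1)P(x)$ that justifies the identification. Once $P(-1) = n$ is established, the rest is a one-line congruence argument.
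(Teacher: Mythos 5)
Your proposal is correct and follows essentially the same route as the paper: reduce modulo $b$ using $x \equiv -1 \pmod{b}$, observe that every term of the alternating sum becomes $+1$ so the expression is congruent to $n$, and conclude. The extra justification via the identity $P(x) - P(-1) = (x+1)Q(x)$ is a slightly more explicit version of the standard fact that congruent inputs give congruent polynomial values, but the substance is identical.
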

\begin{proof}

Let $b|x+1.$ Then $x+1 \equiv 0 \pmod{b},$ so  $x \equiv -1 \pmod{b}.$ Then,
    \begin{align*}
        x^{n-1}-x^{n-2}+x^{n-3}-&\cdots-x+1 \\
        &\equiv (-1)^{n-1}-(-1)^{n-2}+(-1)^{n-3}-\cdots-(-1)+1 \pmod{b} \\
        &\equiv    1+1+1+\cdots+1+1 \pmod{b} \\
        & \equiv    n \pmod{b}.  
        \end{align*}

Therefore $b|x^{n-1}-x^{n-2}+x^{n-3}-\cdots-x+1$ if and only if $ n \equiv 0 \pmod{b},$ or equivalently, $b|n$.
  
\end{proof}

We obtain the following corollary as a result of the above lemma. 

\begin{cor}
\label{general2}
Suppose $\delta |n$ and $x^{\delta}+1$ is not the sum of two squares because of some prime $p.$ If $p\nmid n,$ then $x^{n}+1$ is not the sum of two squares.
\end{cor}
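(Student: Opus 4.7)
The plan is to verify the corollary by identifying the prime $p$ as the obstruction for $x^n + 1$ as well, using Fermat's two squares theorem. Since $x^\delta + 1$ fails to be a sum of two squares because of the prime $p$, by the terminology set up in Section~\ref{back} we have $p \equiv 3 \pmod{4}$ and $p^r \| x^\delta + 1$ for some odd $r$. The goal is to show that this same $p$ still divides $x^n + 1$ exactly to the odd power $r$.

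First I would work in the setting where $n$ is odd (this is the relevant case, since if $n$ were even then $x^n + 1 = (x^{n/2})^2 + 1^2$ is trivially a sum of two squares and the statement would be vacuous or require reinterpretation). Writing $k = n/\delta$, the hypothesis $\delta \mid n$ forces $k$ to be odd as well, so the standard odd-exponent factorization gives
\[
  x^n + 1 = (x^\delta)^k + 1 = (x^\delta + 1)\bigl((x^\delta)^{k-1} - (x^\delta)^{k-2} + \cdots - x^\delta + 1\bigr).
\]
Call the second factor $Q$.

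Next I would apply Lemma~\ref{general1} with the variable taken to be $y = x^\delta$, the modulus $b = p$, and exponent $k$ (which is odd). Since $p \mid x^\delta + 1 = y + 1$, the lemma tells us that $p \mid Q$ if and only if $p \mid k$. But $k$ divides $n$, and the hypothesis is that $p \nmid n$, so $p \nmid k$, and hence $p \nmid Q$. Combined with $p^r \| x^\delta + 1$, this yields $p^r \| x^n + 1$ with the same odd exponent $r$. Therefore $x^n + 1$ fails Fermat's two squares criterion because of $p$, and so is not the sum of two squares.

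The only real subtlety is making sure the odd-exponent factorization applies, which reduces to verifying $k = n/\delta$ is odd; this is automatic once $n$ itself is odd, which is the standing assumption throughout the paper. There is no serious obstacle here — the corollary is essentially a direct repackaging of Lemma~\ref{general1} through the Fermat characterization.
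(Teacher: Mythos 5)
Your proof is correct and follows essentially the same route as the paper: factor $x^{n}+1 = (x^{\delta}+1)\,Q$ and apply Lemma~\ref{general1} (with $x^{\delta}$ in place of $x$) to conclude $p \nmid Q$, hence $p^{r} \| x^{n}+1$. If anything, your version is slightly more careful than the paper's in noting that the lemma controls divisibility by $k = n/\delta$ rather than by $n$ directly, with $k \mid n$ closing that gap.
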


\begin{proof}
Consider $$x^{n}+1=(x^{\delta}+1)(x^{n-\delta}-x^{n-2\delta}+x^{n-3\delta}-\cdots-x^{\delta}+1).$$ Since $x^{\delta}+1$ is not the sum of two squares because of $p$, we have $p\equiv 3 \pmod{4},\ r$ odd and $p^{r}||x^{\delta}+1$. Then $p \nmid n$ implies $p\nmid x^{n-\delta}-x^{n-2\delta}+x^{n-3\delta}-\cdots-x^{\delta}+1$ by Lemma ~\ref{general1}, and thus $p^{r}||x^{n}+1$ and implying that $x^{n}+1$ is not the sum of two squares.  
\end{proof}

\begin{lem}
\label{general3}
Let $p$ be a prime such that $p^{e}||a^{m}+1$ for some $e \in \mathbb{N}$, and let $n=mcp^{k}$ with $\gcd(c, p) = 1$ and $k\geq 0.$ Then $p^{e+k}||a^{n}+1$.
\end{lem}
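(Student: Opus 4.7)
The plan is to induct on $k$. The base case $k=0$ reduces to a factorization argument together with Lemma~\ref{general1}, and the inductive step reduces to the single lift
\[
  p^{e+1} \,\|\, a^{mp}+1,
\]
which I will verify by a direct binomial expansion of the familiar lifting-the-exponent form.

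For the base case $k=0$, I have $n = mc$ with $\gcd(c,p)=1$. Because the paper's setting is $n$ odd, $c$ is odd, and so
\[
  a^{mc}+1 \;=\; (a^m+1)\,\bigl((a^m)^{c-1}-(a^m)^{c-2}+\cdots-a^m+1\bigr).
\]
Since $p\mid a^m+1$, Lemma~\ref{general1} (with $x=a^m$ and $b=p$) says $p$ divides the second factor if and only if $p\mid c$. Our hypothesis $\gcd(c,p)=1$ rules this out, so the second factor is coprime to $p$ and $p^e\,\|\,a^{mc}+1$.

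For the inductive step, I decompose $n=mcp^{k+1}$ as $(mp)\cdot c\cdot p^{k}$. If I can show $p^{e+1}\,\|\,a^{mp}+1$, the inductive hypothesis applied with $m'=mp$ and $e'=e+1$ (and the same $c$ and exponent $k$) gives $p^{(e+1)+k}\,\|\,a^n+1$, as desired. To prove the lift, set $y=a^m$ and $w=y+1=a^m+1$, so $v_p(w)=e$. For $p$ odd, expanding by the binomial theorem yields
\[
  y^p+1 \;=\; (-1+w)^p+1 \;=\; w\Bigl(p-\binom{p}{2}w+\binom{p}{3}w^2-\cdots+w^{p-1}\Bigr).
\]
The bracketed factor has constant term $p$ of valuation exactly $1$, while every other term $\binom{p}{j}w^{j-1}$ ($2\le j\le p-1$) has valuation $\ge 1+(j-1)e\ge 2$, using $v_p\binom{p}{j}=1$; the final term $w^{p-1}$ has valuation $(p-1)e\ge 2$. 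Thus the bracketed factor has valuation exactly $1$, giving $v_p(a^{mp}+1)=v_p(w)+1=e+1$.

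The main obstacle is the \emph{sharpness} of the valuation in the lift: it is easy to see the bracketed factor is divisible by $p$, but one must argue it is not divisible by $p^2$, which uses both $v_p\binom{p}{j}=1$ for $1\le j\le p-1$ and $e\ge 1$. The prime $p=2$ is genuinely different (the factorization $y^p+1=(y+1)Q(y)$ breaks down when $c$ is even), but in the odd-$n$ setting that drives this paper the only way $p=2$ can appear is with $k=0$ and $c$ odd, so the base case already handles it.
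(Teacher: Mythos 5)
Your proof is correct, but it takes a genuinely different route from the paper. The paper deduces the lemma from the cyclotomic factorization $a^{n}+1=\prod_{\delta\mid n}\Phi_{2\delta}(a)$ together with Theorem~\ref{cyclothm}: it locates the smallest $d\mid m$ with $p\mid\Phi_{2d}(a)$ and observes that the only cyclotomic factors of $\frac{a^{n}+1}{a^{m}+1}$ carrying the prime $p$ are $\Phi_{2dp^{i+1}}(a),\dots,\Phi_{2dp^{i+k}}(a)$, each exactly once. Your argument is the classical lifting-the-exponent computation: the base case $k=0$ via Lemma~\ref{general1}, and the single lift $v_p(a^{mp}+1)=v_p(a^{m}+1)+1$ via the binomial expansion of $(w-1)^{p}+1$, with the sharpness coming from $v_p\binom{p}{j}=1$ and $e\ge 1$ exactly as you say. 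Your version is more elementary and self-contained (it never invokes Theorem~\ref{cyclothm}), while the paper's version has the side benefit of identifying \emph{which} cyclotomic factors absorb the powers of $p$, information that is reused in the proofs of Theorems~\ref{1mod8} and~\ref{3mod4}. Both proofs silently use that $n/m$ is odd --- the paper through the cyclotomic product formula, you through the factorization $y^{c}+1=(y+1)Q(y)$ and the restriction to odd $c$ --- and indeed the lemma as literally stated fails without that hypothesis (e.g.\ $a=3$, $m=1$, $p=2$, $n=2$), so you are right to flag it; you handle the residual $p=2$, $k=0$ case cleanly.
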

\begin{proof}
Using notation from the statement of the theorem, we can write:
\[
a^n+1 = \left(a^{m}+1\right) \cdot \left(\frac{a^{n}+1}{a^m+1}\right).
\]
Then, recalling how $a^m+1$ factors into cyclotomics, 
we let $d$ be the smallest divisor of $m$ such that $p | \Phi_{2d}(a)$. 
Thanks to Theorem~\ref{cyclothm}, we know that 
$p|| \Phi_{2dp}(a)$, $p|| \Phi_{2dp^2}(a)$, and so on, yet
$p$ does not divide into any other cyclotomic expressions not of that form.  
Now, choose $i$ as large as possible such that $2dp^i | m$. 
Then, by our definition of $n$,
we know that everything in the set $\{dp^{i+1}, dp^{i+2}, \dots, dp^{i+k}\}$
divides into $n$ yet none of them divide into $m$, and we also know from 
Theorem~\ref{cyclothm} (as mentioned above) that each
of the $k$ expressions 
$\Phi_{2dp^{i+1}}(a), \Phi_{2dp^{i+2}}(a), \dots, \Phi_{2dp^{i+k}}(a)$ contains
exactly one copy of the prime $p$ and that no other cyclotomic divisors of
$\frac{a^n+1}{a^m+1}$ contain this prime $p$. Hence, 
since $p^{e}||a^{m}+1$, 
then $p^{e+k}||a^{n}+1$.
\end{proof}


\section{Proof of Theorem ~\ref{perfect}}
\label{perfectsquare}

This section will begin with the proofs of several lemmas, from which the proof of Theorem ~\ref{perfect} is constructed.

\begin{lem}
\label{step1}
Suppose there exists a prime $p \equiv 3 \pmod{4}$ such that $\legen{a}{p} = -1$. Then either $a^{\frac{p-1}{2}}+1$ or $a^{\frac{p(p-1)}{2}}+1$ is not a sum of two squares.
\end{lem}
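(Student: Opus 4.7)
The plan is to use Euler's criterion to locate $p$ inside $a^{(p-1)/2}+1$, and then split on the parity of the exact exponent. The two cases of the conclusion correspond exactly to these two parities, with Lemma~\ref{general3} bumping the exponent up by one in the second case so that Fermat's two squares theorem again obstructs expressibility.

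More precisely, since $\legen{a}{p}=-1$, Euler's criterion gives $a^{(p-1)/2}\equiv -1\pmod{p}$, so $p\mid a^{(p-1)/2}+1$. Let $e\geq 1$ be the unique integer with $p^{e}\|a^{(p-1)/2}+1$. Because $p\equiv 3\pmod{4}$, Fermat's two squares theorem says that a positive integer fails to be a sum of two squares as soon as some prime $\equiv 3\pmod{4}$ divides it to an odd power. So the strategy is simply to display $p$ to an odd power in one of the two candidate integers.

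Case 1: $e$ is odd. Then $p^{e}\|a^{(p-1)/2}+1$ with $e$ odd and $p\equiv 3\pmod{4}$, so $a^{(p-1)/2}+1$ is not a sum of two squares, and we are done.

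Case 2: $e$ is even. Here I would apply Lemma~\ref{general3} with $m=(p-1)/2$, $c=1$, and $k=1$, so that $n=mp=p(p-1)/2$. The lemma yields $p^{e+1}\|a^{p(p-1)/2}+1$, and since $e$ is even, $e+1$ is odd. Invoking Fermat's theorem again, $a^{p(p-1)/2}+1$ is not a sum of two squares. Since the two cases are exhaustive, the lemma follows. The only subtle point is checking the hypotheses of Lemma~\ref{general3} are met (namely $p\mid a^{m}+1$ and $\gcd(c,p)=1$), both of which are immediate from our setup.
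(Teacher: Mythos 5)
Your proposal is correct and follows essentially the same route as the paper: Euler's criterion to get $p \mid a^{\frac{p-1}{2}}+1$, then Lemma~\ref{general3} with $m=\frac{p-1}{2}$ and $n=\frac{p(p-1)}{2}$ to raise the exponent of $p$ by one when it is even. Your case split on the parity of $e$ is just a mild rephrasing of the paper's split on whether $a^{\frac{p-1}{2}}+1$ is a sum of two squares.
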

\begin{proof}
If $a^{\frac{p-1}{2}}+1$ is not a sum of two squares, then we are done. Suppose $a^{\frac{p-1}{2}}+1$ is a sum of two squares. By Euler's criterion,
we have that $a^{\frac{p-1}{2}} \equiv -1 \pmod{p}$, and
it follows therefore that for some $k \in \mathbb{N}$, $p^{2k} \parallel a^{\frac{p-1}{2}}+1$. By Lemma~\ref{general3}, letting $m = \frac{p-1}{2}$ and $n = \frac{p(p-1)}{2}$, we know that $p^{2k+1} \parallel a^{\frac{p(p-1)}{2}}+1$. Thus, by Fermat's two squares theorem, $a^{\frac{p(p-1)}{2}}+1$ is not the sum of two squares.
\end{proof}
As an example, we examine $148^n + 1$. We can conclude from the prime factorization of $148^n + 1$ that $148^n + 1$ is a sum of two squares for all odd $n < 9$. Note that $9 = \frac{19-1}{2}$ and that $19$ is the smallest prime $p \equiv 3 \pmod{4}$ for which the Legendre symbol $\legen{148}{p} = -1$. Calculation and Fermat's two square theorem reveal that $148^{\frac{19-1}{2}} + 1 = 148^{9} +1$ is not a sum of two squares.

Before we continue on to the next lemma, we need to define the following function for an integer $a$:
$$\chi_{a}(n) = \begin{cases} \legen{a}{n} &\textrm{ if } n \textrm{ is odd, and}\\ 0 &\textrm{ if } n \textrm{ is even.} \end{cases}$$

\begin{lem}
\label{step3}
If $p$ is an odd prime, define
$$p^{*} = \begin{cases} p &\textrm{ if } p \equiv 1 \pmod{4}\\ -p &\textrm{ if } p \equiv 3 \pmod{4}. \end{cases}$$
If $q$ is an odd prime, then $\chi_{p^*}(q) = \legen{q}{p}$. Additionally, there is some $\delta \in \{1, -1, 2, -2\}$ such that for each $n$ with $\gcd(a, n) = 1$, $$\chi_{a}(n) = \chi_{\delta}(n) \prod_{p^r \parallel a \text{ with } r \text { odd}} \chi_{p^*}(n).$$
\end{lem}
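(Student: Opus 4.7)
The plan is to handle the two assertions separately. The first is a direct application of quadratic reciprocity, and the second follows from multiplicativity of the Jacobi symbol applied to the prime factorization of $a$.

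For the first claim, I would unwind the definition to $\chi_{p^*}(q) = \legen{p^*}{q}$ and split into cases based on $p \pmod{4}$. When $p \equiv 1 \pmod{4}$, we have $p^* = p$ and the reciprocity sign $(-1)^{\frac{p-1}{2}\cdot\frac{q-1}{2}}$ equals $1$ because $\frac{p-1}{2}$ is even; hence $\legen{p}{q} = \legen{q}{p}$. When $p \equiv 3 \pmod{4}$, I rewrite $\legen{-p}{q} = \legen{-1}{q}\legen{p}{q} = (-1)^{\frac{q-1}{2}}\legen{p}{q}$, and quadratic reciprocity now gives $\legen{p}{q} = (-1)^{\frac{q-1}{2}}\legen{q}{p}$ since $\frac{p-1}{2}$ is odd; multiplying, the two signs cancel and the result is $\legen{q}{p}$, as required.

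For the second claim, write the prime factorization $a = \epsilon \cdot 2^s \prod_i p_i^{e_i}$ where $\epsilon \in \{\pm 1\}$, $s \geq 0$, and the $p_i$ are distinct odd primes. For $n$ odd with $\gcd(a,n) = 1$, multiplicativity of the Jacobi symbol in the upper argument gives $\chi_a(n) = \chi_{\epsilon}(n)\,\chi_2(n)^s \prod_i \chi_{p_i}(n)^{e_i}$. Primes with $e_i$ even contribute $1$, so only the primes appearing to an odd power survive. For each such odd prime $p_i$ I convert $\chi_{p_i}(n)$ into $\chi_{p_i^*}(n)$; this is free when $p_i \equiv 1 \pmod{4}$ and introduces a factor of $\chi_{-1}(n)$ when $p_i \equiv 3 \pmod{4}$, since then $p_i^* = -p_i$ and $\chi_{-p_i}(n) = \chi_{-1}(n)\chi_{p_i}(n)$. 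Collecting these extra factors yields $\chi_a(n) = \chi_{-1}(n)^u\,\chi_2(n)^v \prod_{p^r \| a,\, r\text{ odd}} \chi_{p^*}(n)$ for nonnegative integers $u, v$ depending only on $a$. Since $\chi_{-1}^2 = 1$ and $\chi_2^2 = 1$ on odd inputs, and since $\chi_{-1}\chi_2 = \chi_{-2}$ by multiplicativity, the prefactor collapses to $\chi_{\delta}(n)$ for a unique $\delta \in \{1, -1, 2, -2\}$ determined by the parities of $u$ and $v$.

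The hardest piece is just careful bookkeeping: one must correctly track how many factors of $\chi_{-1}$ appear (one from $\epsilon$ if $a < 0$, plus one from each $p_i \equiv 3 \pmod{4}$ with $e_i$ odd) and how many factors of $\chi_2$ appear (namely $s$), and then use multiplicativity to coalesce them into a single $\chi_{\delta}$. No machinery beyond quadratic reciprocity and the standard properties of the Jacobi symbol is required.
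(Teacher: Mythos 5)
Your proof is correct. The first claim is handled exactly as in the paper: a case split on $p \bmod 4$, with the sign $\legen{-1}{q}=(-1)^{\frac{q-1}{2}}$ cancelling the reciprocity sign when $p\equiv 3\pmod 4$. For the second claim your bookkeeping is organized differently from the paper's, and arguably more cleanly. The paper first flips every factor $\legen{p}{q}$ to $\legen{q}{p}$ across the whole double product and then computes the aggregate sign $(-1)^d$ with $d=\frac{n-1}{2}\cdot\frac{a/2^b-1}{2}$, which it folds into $\delta$; it then identifies $\prod\legen{q}{p}^{rl}$ with $\prod\chi_{p^*}(n)$ via the first part of the lemma. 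You instead never flip the symbol: you keep $n$ in the lower argument throughout, discard even exponents, trade each $\legen{p_i}{n}$ for $\legen{p_i^*}{n}$ at the local cost of one factor of $\chi_{-1}(n)$ when $p_i\equiv 3\pmod 4$, and collapse the accumulated $\chi_{-1}^u\chi_2^v$ into a single $\chi_\delta$. This avoids the global sign count $d$ (the most delicate step in the paper's version) and makes the second part independent of the first. Two trivial points to tidy: the case of even $n$, where both sides vanish by definition of $\chi$, should be stated; and since the paper's $a$ is a positive integer you may take $\epsilon=1$ in your factorization.
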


\begin{proof}
Because $p$ and $q$ are both prime,
$$\chi_{p^*}(q) = \legen{p^*}{q} = \begin{cases} \legen{p}{q} &\text{ if } p \equiv 1 \pmod{4}\\ \\ \legen{p}{q} \legen{-1}{q} &\text{ if } p \equiv 3 \pmod{4}. \end{cases}$$
If $p \equiv 1 \pmod{4}$ or $q \equiv 1 \pmod{4}$, then $\legen{p}{q} = \legen{q}{p}$ by the law of quadratic reciprocity.\\
If $p \equiv 3 \pmod{4}$ and $q \equiv 3 \pmod{4}$, then by the law of quadratic reciprocity  $\legen{p}{q} = -\legen{q}{p}$. In addition,  $\legen{-1}{q} = (-1)^{\frac{q-1}{2}} = -1$. Then $\legen{p^*}{q} = \legen{p}{q} \legen{-1}{q} = -\legen{q}{p} (-1) = \legen{q}{p}$. Thus $\chi_{p^*}(q) = \legen{q}{p}$.\\
If $n$ is even, then $\chi_{a}(n) = 0$.
Looking at odd $n$, we note $\displaystyle \prod_{\substack{p^r \parallel a\\q^l \parallel n}} \legen{p}{q} = (-1)^d \prod_{\substack{p^r \parallel a\\q^l \parallel n}} \legen{q}{p}$, where the exponent $d$ depends on $n$ and $a$. From the law of quadratic reciprocity, we know that $\legen{p}{q} = - \legen{q}{p}$ only if both $p$ and $q$ are congruent to $3 \pmod{4}$. Then $d$ is the product of the number of primes $p \equiv 3 \pmod{4}$ which, to an odd power, exactly divide $a$, and the number of primes $q \equiv 3 \pmod{4}$ which, to an odd power, exactly divide $n$. Thus, if $2^b \parallel a$, then $d$ is even if $\dfrac{a}{2^b} \equiv 1 \pmod{4}$ or $n \equiv 1 \pmod{4}$ and $d$ is odd if both $\dfrac{a}{2^b}$ and $n$ are congruent to $3 \pmod{4}$. This is equivalent to writing $d = \dfrac{n-1}{2}\dfrac{\dfrac{a}{2^b}-1}{2}$. Then
\begin{eqnarray*}
\chi_{a}(n) &= \legen{a}{n} &= \left( \dfrac{2^{b}}{n} \right) \prod_{\substack{p^r \parallel a\\ q^l \parallel n}}\legen{p}{q}^{rl}\\ 
& & = \left( \dfrac{2^{b}}{n} \right) (-1)^{\frac{n-1}{2}\frac{\frac{a}{2^b}-1}{2}} \prod_{\substack{p^r \parallel a\\ q^l \parallel n}}\legen{q}{p}^{rl}\\
& & =\left( \dfrac{(-1)^{\frac{\frac{a}{2^b}-1}{2}}2^{b}}{n} \right) \prod_{\substack{p^r \parallel a\\ q^l \parallel n}}\legen{q}{p}^{rl}\\
& & = \chi_{\delta}(n)  \prod_{p^r \parallel a} \chi_{p^*}(n),
\end{eqnarray*}
where $\delta \in \{ -1, 1, -2, 2 \}$.
\end{proof}

In the following lemma, we examine the case where $a$ is twice a square.
\begin{lem}
\label{step4}
If $a$ is twice a square, then there is a prime $q \equiv 3 \pmod{8}$ so that $\chi_{\delta}(q) = -1$, and hence $\chi_{a}(q) = -1$.
\end{lem}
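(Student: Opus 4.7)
My plan is to apply Lemma~\ref{step3} to pin down the value of $\delta$ associated with $a = 2s^{2}$, and then to produce $q$ via Dirichlet's theorem. First I would factor $s = 2^{t} m$ with $m$ odd, so that $a = 2^{2t+1} m^{2}$; the key structural observation is that $2$ is the only prime dividing $a$ to an odd power, since every odd prime divisor of $a$ comes from $s^{2}$ and hence appears with even multiplicity in $a$. This immediately collapses the product in Lemma~\ref{step3}, giving $\chi_{a}(n) = \chi_{\delta}(n)$ for every $n$ coprime to $a$.

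The central step is computing $\delta$ from the formula that arises in the proof of Lemma~\ref{step3}, namely that $\delta$ is (up to a square factor, which does not affect Kronecker symbols against integers coprime to it) the quantity $(-1)^{(a/2^{b} - 1)/2} \cdot 2^{b}$, where $b = v_{2}(a)$. In our situation $b = 2t+1$ is odd, so $2^{b} = 2 \cdot (2^{t})^{2}$ contributes a factor of $2$ modulo squares; and $a/2^{b} = m^{2} \equiv 1 \pmod{8}$, so $(m^{2}-1)/2$ is divisible by $4$ and the sign factor $(-1)^{(m^{2}-1)/2}$ is $+1$. Hence $\delta = 2$.

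To close the argument, Dirichlet's theorem on primes in arithmetic progressions supplies infinitely many primes $q \equiv 3 \pmod{8}$, all but finitely many of which are coprime to $a$; pick any such $q$. For such a $q$ one has $\chi_{\delta}(q) = \legen{2}{q} = -1$, by the standard supplementary law for $\legen{2}{\cdot}$, which is $-1$ exactly on residues $q \equiv \pm 3 \pmod{8}$. Combining with the reduction $\chi_{a}(q) = \chi_{\delta}(q)$ from the first paragraph gives $\chi_{a}(q) = -1$, as required.

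The only subtle point I anticipate is confirming that $\delta = 2$ rather than $\delta = -2$: if instead $\delta = -2$ were to appear, we would get $\chi_{-2}(q) = \legen{-1}{q}\legen{2}{q} = (-1)(-1) = +1$ on $q \equiv 3 \pmod{8}$, and the lemma would fail. So the crux of the proof really is the elementary but crucial congruence $m^{2} \equiv 1 \pmod{8}$ for odd $m$, which is what pins the sign of $\delta$ down to $+1$ and makes the choice of residue class $3 \pmod{8}$ in the statement work.
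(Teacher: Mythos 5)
Your proof is correct and follows essentially the same route as the paper's: invoke Lemma~\ref{step3}, note that since $a = 2^{2t+1}m^{2}$ the product over odd primes dividing $a$ to an odd power is empty, and choose a prime $q \equiv 3 \pmod{8}$ coprime to $a$ so that $\legen{2}{q} = -1$. The one place you go beyond the paper is in explicitly pinning down $\delta = 2$ (rather than $-2$) via the congruence $m^{2} \equiv 1 \pmod{8}$; the paper's proof simply writes $\chi_{a}(q) = \chi_{2}(q)\cdot(1)$ and leaves that determination implicit, so your extra care there is a welcome tightening rather than a divergence.
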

\begin{proof}
Since there are infinitely many primes congruent to $3 \pmod{8}$, it is always possible to pick such a prime $q$ which does not divide $a$ and has the property $\chi_{2}(q) = -1$. Because $q \nmid a$ and $a$ is twice a square, by Lemma~\ref{step3} we can write
$$\chi_{a}(q) = \chi_{2}(q) \prod_{p^{2k} \parallel a}\chi_{p^*}(q) = \chi_{2}(q) (1)=-1.$$
\end{proof}

To be complete, we must now examine the case where $a$ is neither a square nor twice a square.
\begin{lem}
\label{step5}
If $a$ is not a square and not twice a square, then there is a prime $q \equiv 3 \pmod{4}$ so that $\chi_{a}(q) = -1$.
\end{lem}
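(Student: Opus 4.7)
The plan is to recognize $\chi_{a}$ as a non-trivial real Dirichlet character and then use a short group-theoretic observation together with Dirichlet's theorem on primes in arithmetic progressions to produce a prime $q \equiv 3 \pmod{4}$ with $\chi_{a}(q) = -1$.

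First I would clean up the hypothesis. Write $a = 2^{b} m$ with $m$ odd. The condition that $a$ is neither a square nor twice a square is equivalent to $m$ not being a square: $a$ is a square exactly when $b$ is even and $m$ is a square, while writing $a = 2k^{2}$ with $k = 2^{c}\ell$ ($\ell$ odd) gives $a = 2^{2c+1}\ell^{2}$, so $a$ is twice a square exactly when $b$ is odd and $m$ is a square. The conjunction of the two negations is therefore precisely that $m$ is not a square.

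Next I would apply Lemma~\ref{step3} to expand
\[
\chi_{a}(n) = \chi_{\delta}(n) \prod_{\substack{p^{r}\|a \\ r \text{ odd}}} \chi_{p^{*}}(n)
\]
for some $\delta \in \{1,-1,2,-2\}$, where the product runs over odd primes $p$ dividing $a$ to an odd power. Because $m$ is not a square, this product is non-empty. Each factor $\chi_{p^{*}}$ is a primitive real Dirichlet character of odd prime conductor $p$, and $\chi_{\delta}$ is either trivial or a primitive character of $2$-power conductor dividing $8$. Since all these conductors are pairwise coprime, their product $\chi_{a}$ is a primitive, non-trivial real Dirichlet character modulo some $N$; enlarging to $4N$ if necessary, we may assume $4 \mid N$.

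It then remains to show that any non-trivial real Dirichlet character $\chi$ modulo such an $N$ takes value $-1$ on some class $c_{0} \equiv 3 \pmod{4}$; Dirichlet's theorem on primes in arithmetic progressions will then furnish a prime $q \equiv c_{0} \pmod{N}$ with $q \equiv 3 \pmod{4}$ and $\chi_{a}(q) = -1$. The needed fact is a short coset calculation: the set $C = \{c \in (\Z/N\Z)^{*} : c \equiv 3 \pmod{4}\}$ is the non-identity coset of the index-$2$ subgroup $H = \{c \equiv 1 \pmod{4}\}$, and for any fixed $c_{1} \in C$ the map $c \mapsto c_{1}c$ is a bijection $C \to H$, so $C \cdot C = H$ and hence $C$ generates $(\Z/N\Z)^{*}$. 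If $\chi$ were identically $+1$ on $C$, multiplicativity would force $\chi \equiv 1$ on the whole group, contradicting non-triviality.

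The main conceptual hurdle is verifying the non-triviality of $\chi_{a}$: it relies on the standard fact that a product of primitive real Dirichlet characters of pairwise coprime conductors is itself primitive (and thus non-trivial as soon as one factor is), which must be invoked carefully in the decomposition from Lemma~\ref{step3}. The coset argument and the appeal to Dirichlet's theorem are then routine.
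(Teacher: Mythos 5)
Your proposal is correct, and it reaches the conclusion by a genuinely different route than the paper. The paper works concretely: it splits into cases according to the value of $\delta \in \{1,-1,2,-2\}$ from Lemma~\ref{step3}, and in each case hand-builds a system of congruences (forcing $q$ to be a residue modulo all but one of the odd primes $r_i$ exactly dividing $a$ to an odd power, a non-residue modulo one chosen $r_j$ when $\delta \ne -1$, and lying in a suitable class modulo $8$) whose solution, via Dirichlet's theorem, is the desired prime. You instead package $\chi_{a}$ as a single non-principal real Dirichlet character --- non-principality coming from the non-emptiness of the product over odd primes, which you correctly derive from ``$a$ is neither a square nor twice a square'' --- and then prove the clean abstract statement that a non-principal character modulo $N$ with $4 \mid N$ must take the value $-1$ on the coset $\{c \equiv 3 \pmod 4\}$, since otherwise the identity $C \cdot C = H$ forces triviality. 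Your version avoids the case analysis on $\delta$ entirely and isolates the real content (non-triviality of the character) from the bookkeeping, at the cost of invoking slightly heavier standard machinery (primitivity of products of characters with coprime conductors, though mere non-principality would suffice and is easy to check directly). Two small points to tidy: when you enlarge the modulus to $4N$ you should note the induced character stays non-principal, and when you invoke Dirichlet you should pick $q$ not dividing $a$ (e.g.\ $q > a$) so that $\legen{a}{q} \ne 0$ and the factorization of Lemma~\ref{step3} applies; both are one-line fixes, and the paper's own proof glosses over the same coprimality issue.
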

\begin{proof}
Suppose $\delta = -1$. Then for all $r_i$ such that $r_i^{2k + 1} \parallel a$ for some $k \in \mathbb{Z}$, there is a set of congruences $q \equiv 1 \pmod{r_i}$ and $q \equiv 3 \pmod{4},$
which can be solved for a prime $q$ that has the property $\chi_{r_i}(q) = 1$ for all $r_i$ and $\chi_{\delta}(q) = -1$. Then
$$\chi_{a}(q) = \chi_{\delta}(q) \prod_{r_i^{2k + 1} \parallel a}\chi_{r_i^*}(q) = (-1)(1)=-1.$$
Alternatively, suppose $\delta \neq -1$. Then there is a congruence class$\pmod{8}$ so that if $q \equiv \text{c} \pmod{8}$, then $q \equiv 3 \pmod{4}$ and $\chi_{\delta}(q) = 1$. Then for all $r_i$ such that $r_i^{2k + 1} \parallel a$, there is a set of congruences
\begin{eqnarray*}
q &\equiv &\text{quadratic non-residue} \pmod{r_j} \text{ for some set } r_j\\
q &\equiv &1 \pmod{r_i} \text{ for all } i \neq j\\
q &\equiv &\text{c} \pmod{8}
\end{eqnarray*}
which can be solved for a prime $q$ that has the property $\chi_{r_i}(q) = 1$ for all $i \neq j$ and $\chi_{r_j}(q) = -1$. Then
$$\chi_{a}(q) = \chi_{\delta}(q) \prod_{r_i^{2k + 1} \parallel a}\chi_{r_i^*}(q) = (1)(-1)=-1.$$
Thus it is always possible to find some prime $q \equiv 3 \pmod{4}$ such that $\chi_{a}(q) = -1$.
\end{proof}

Using these lemmas, we can now prove Theorem ~\ref{perfect}.
\begin{proof}[Proof of Theorem ~\ref{perfect}]
Note that any number that is not a square is either twice a square or not twice a square. Lemmas ~\ref{step4} and ~\ref{step5} show that for any number that is not a square, it is possible to pick a prime $q \equiv 3 \pmod{4}$ such that $\chi_{a}(q) = -1$. Hence $\legen{a}{q} = -1$ and so by Lemma ~\ref{step1}, either $a^{\frac{p-1}{2}} + 1$ or $a^{\frac{p(p-1)}{2}} + 1$ is not a sum of two squares and so there is at least one value of $n$ for which $a^n + 1$ is not a sum of two squares.
\end{proof}


\section{Even}
\label{evensec}


Now we consider the case when $a$ is even. We prove Theorems~\ref{even}
and \ref{4x}.


\begin{proof}[Proof of Theorem ~\ref{even}]
Suppose that $a^{n} + 1$ is the sum of two squares. If 
$a^\delta + 1$ is also the sum of two squares for every divisor $\delta$ of $n$, then we are done. If not, then let
$\delta$ be the largest divisor of $n$ so that $a^{\delta} + 1$ is not
the sum of two squares. Thus, $\delta < n$ and so there is a prime $p$
that divides $n/\delta$. By assumption, we have that $a^{\delta p} + 1$ is
the sum of two squares and
\[
  a^{\delta p} + 1 = (a^{\delta} + 1) (a^{\delta (p-1)} - a^{\delta (p-2)} + \cdots + 1).
\]
Lemma ~\ref{general1} implies that $\gcd\left(a^{\delta} + 1, \frac{a^{\delta p} + 1}{a^{\delta} + 1}\right)$ divides $p$. Since $a^{\delta} + 1$ is not the sum of two
squares, the gcd cannot be $1$ and so it must be $p$. Moreover,
\[
  \frac{a^{\delta p} + 1}{p^{2}}
  = \left(\frac{a^{\delta} + 1}{p}\right) \left(\frac{a^{\delta p} + 1}{p(a^{\delta} + 1)}\right)
\]
is a sum of two squares and the product of two relatively prime integers.
Thus, $\frac{a^{\delta} + 1}{p}$ is the sum of two squares. It follows
that $p \equiv 3 \pmod{4}$ and since $a^{\delta} + 1$ is odd, we get
\[
  a^{\delta} + 1 = p \times \text{sum of two squares} \equiv 3 \pmod{4}.
\]
However, since $a$ is even, we must have that $\delta = 1$ and the
previous equation implies that $p$ is the unique prime $\equiv 3 \pmod{4}$
that divides $a+1$ to an odd power.
\end{proof}



Let us consider a special case of even $a$, where $a$ is a multiple of $4$.

\begin{proof}[Proof of Theorem ~\ref{4x}]

First, we show that $a^{x} + 1$ is not the sum of two squares. We have that
\[
  a^{x} + 1 = \prod_{\substack{d | 2x \\ d \nmid x}} \Phi_{d}(a).
\]
We apply Theorem 2.1 of \cite{Stevenhagen} to $\Phi_{2x}(y) \in \Z[y]$.
We set $n = 2x$, $k = x$, $d(k) = 4x$. Then $d(k) \nmid n$ but $d(k) \mid 2n$.
We have that
\[
  \Phi_{2x}(y) = F(y)^{2} - x y G(y)^{2}.
\]
Assume without loss of generality that the leading coefficient of $F(y)$ is positive. Note that since $\Phi_{2x}(y)$ has even degree, the degree of
$F(y)$ is larger than that of $G(y)$.

Replacing $y$ with $xy^{2}$ we get
\[
  \Phi_{2x}(xy^{2}) = F(xy^{2})^{2} - x (xy^{2}) G(xy^{2})
  = \left(F(xy^{2}) + xy G(xy^{2})\right) \left(F(xy^{2}) - xy G(xy^{2})\right).
\]
Let $f(y)$ and $g(y)$ be the first and second factors above,
respectively.  We have $\Phi_{2x}(a) = \Phi_{2x}(4x) = f(2)
g(2)$. From Theorem 2.7 of \cite{Stevenhagen} we know that
$\gcd(f(2),g(2)) = 1$.  We claim that
$f(2) \equiv g(2) \equiv 3 \pmod{4}$. This will follow
if we show that the constant coefficients of $f(y)$ and $g(y)$ are both
$1$, and the linear coefficients of $f(y)$ and $g(y)$ are both odd.

We have that $f(y) = a_{0} + a_{1} y + a_{2} y^{2} + \cdots$
and $g(y) = a_{0} - a_{1} y + a_{2} y^{2} + \cdots$. Since the constant
coefficient of $\Phi_{2x}(y)$ is $1$, we have that $a_{0}^{2} = 1$ and
so $a_{0} = \pm 1$. If $a_{0} = -1$, then since the leading coefficient
of $F(y)$ is positive, $f(y)$ and $g(y)$ have positive leading coefficients.
However, then $\lim_{y \to \infty} f(y) = \lim_{y \to \infty} g(y) = \infty$
but $f(0) = g(0) = -1$. This implies that $f(y)$ and $g(y)$ both have
a positive real root, but $f(y) g(y) = \Phi_{2x}(xy^{2})$ has no real
roots. This is a contradiction and so $a_{0} = 1$.

It is well-known that if $n > 1$, the coefficient of $y$ in $\Phi_{n}(y)$
is $-\mu(n)$ (see for example, the last equation on page 107 of
\cite{Lehmer}). 
Multiplying $f(y)$ and $g(y)$, we get
\[
  \Phi_{2x}(xy^{2}) = 1 - \mu(2x) xy^{2} + \cdots = a_{0}^{2} + (2a_{0} a_{2} - a_{1}^{2}) y^{2} + \cdots.
\]
We have that $\mu(2x) = \pm 1$ is odd and $-\mu(2x) = 2a_{0} a_{2} - a_{1}^{2}$.
Thus, $a_{1}^{2} \equiv \mu(2x) \pmod{2}$ and so $a_{1}$ is odd.
Thus, $f(2) \equiv a_{0} + 2a_{1} \equiv 1 + 2 \equiv 3 \pmod{4}$
and likewise $g(2) \equiv a_{0} - 2a_{1} \equiv 1 - 2 \equiv 3 \pmod{4}$.

Thus, there is a prime $p \equiv 3 \pmod{4}$ and an odd $j$ so that 
$p^{j} \| f(2)$ and a prime $q \equiv 3 \pmod{4}$ and an odd
$k$ so that $q^{k} \| g(2)$. Since $\gcd(f(2),g(2)) = 1$,
we have $p \ne q$. 

We claim that at most one of $p$ or $q$ divides $x$. Suppose
to the contrary that $p | x$ and $q | x$. Since $p | \Phi_{2x}(a)$,
Theorem ~\ref{cyclothm} implies that $2x = p\cdot\ord_{p}(a)$ and
since $q | \Phi_{2x}(a)$, we get that $2x = q \cdot \ord_{q}(a)$.
This implies that $\ord_{p}(a) = \frac{2x}{p}$ is a multiple of $q$
and $\ord_{q}(a) = \frac{2x}{q}$ is a multiple of $p$. This is a contradiction,
because either $p < q$ (in which case $q \leq \ord_{p}(a) \leq p-1$)
or $q < p$ (in which case $p \leq \ord_{q}(a) \leq q-1$).

Thus, at most one of $p$ or $q$ divides $x$. Assume without loss of generality
that $p \nmid x$. Then we have that $p^{j} \| \Phi_{2x}(a)$ and
Theorem ~\ref{cyclothm} gives that $\ord_{p}(a) = 2x$. This implies that
$p \nmid \Phi_{2 \delta}(a)$ for $\delta | x$ with $\delta \ne x$.
As a consequence, $p^{j} \| a^{x} + 1$ and so $a^{x} + 1$ is not the sum of
two squares.

Now, let $A = a^{x}$. Then $A+1$ is not the sum of two squares, and
$A+1 \equiv 1 \pmod{4}$. Thus, there are at least two primes $\equiv 3 \pmod{4}$
that divides $A+1$ to an odd power, and Theorem \ref{even} implies that $A^{n} + 1$ is never the
sum of two squares for $n$ odd.

\end{proof}


\section{odd}\label{oddsies}

This section contains proofs of Theorems 
 ~\ref{1mod8}, ~\ref{5},  and ~\ref{3mod4}, along with Lemma \ref{oddlem1}, 
which pertain to when $a^n+1$ can be written as a sum of two squares when $a$ is an odd integer. 
In this section, we define $m$ to be the least positive integer such that $\frac{a+1}{m}$ is the sum of two squares.


We begin with $a\equiv 1 \pmod{4}$. We  prove Theorem ~\ref{1mod8} which handles the case $a\equiv 1\pmod{8},$ and Theorem ~\ref{5} which handles $a\equiv 5 \pmod{8}.$


\begin{proof} [Proof of Theorem ~\ref{1mod8}]
Let $a\equiv 1\pmod{8}.$ Then $a^n+1\equiv 2\pmod{8}$ for all $n$, so
$\frac{a^n+1}{2}\equiv 1\pmod{4}.$ Suppose $a^n+1$ is the sum of two squares,
and assume by contradiction that $\delta$ is the largest divisor of $n$ such
that $a^\delta+1$ is not the sum of two squares. 
Since $\frac{a^\delta+1}{2} \equiv 1 \pmod{4}$, 
then there exist distinct primes $q_1\equiv q_2 \equiv 3\pmod{4}$ such that 
$q_1^{j_1}||a^\delta+1$ and $q_2^{j_2}||a^\delta+1, \ j_1, j_2$ odd.
\par
We know from Lemma ~\ref{general3} that since $a^n+1$ is the sum of 
two squares, $q_{1}^{l_{1}} \parallel n$ and
$q_{2}^{l_{2}} \parallel n$ for some odd
$l_{1}$ and $l_{2}$. Without loss of generality, suppose 
$q_1>q_2,$ and consider:
\begin{align*}
    a^{\delta q_1}+1=\big(a^\delta+1\big) \prod_{\substack{\delta_x|\delta q_1 \\ 
    \delta_x \nmid \delta}} \Phi_{2\delta_x}(a).
\end{align*}
Since $q_1>q_2,$ we know $q_1\nmid \ord_{q_2}(a),$ and Theorem~\ref{cyclothm} implies that $q_2\nmid \frac{a^{\delta 
q_1}+1}{a^\delta+1}.$ Then $q_2^{j_2}||a^{\delta q_1}+1,$ so $a^{\delta q_1}+1$ is 
not the sum of two squares. This is a contradiction because $\delta q_1>\delta$ and 
$\delta q_1|n.$ Thus $a^\delta+1$ is the sum of two squares for all $\delta|n.$
\end{proof}


 \begin{proof}[Proof of Theorem ~\ref{5}]
    
    Suppose $a\equiv 5\pmod{8}$ and $n$ is odd. Then: 
        \begin{eqnarray*}
        a^{n}+1&=&a^{2k+1}+1\\
        &\equiv &5^{2k}\cdot5+1 \pmod{8}\\
        &\equiv &6 \pmod{8}.
        \end{eqnarray*}
This implies that $\frac{a^{n}+1} {2} \equiv 3 \pmod{4},$ so by
Fermat's two squares theorem we know that $a^n+1$ is never the sum of two squares when $n$ is odd. 
    \end{proof}

Next, the following  lemmas will be useful in forming contradictions in the proof of Theorem ~\ref{3mod4} because of the restrictions they place on $n$ in order for $a^n+1$ to be the sum of two squares, where $a\equiv 3\pmod{4}$ and $n$ odd.


We begin with two lemmas that cover the modulus of permissible exponents $n$ when 
$a \equiv 3 \pmod{4}$.


\begin{lem}\label{4jplus1}
For $a = 4\cdot 2^i \cdot (4j+1)-1$ with $i, j \geq 0$,
 then $a^n+1$ can only be written as the sum of two squares (for $n$ odd) if $n\equiv 1$ mod $4$. \end{lem}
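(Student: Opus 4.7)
The plan is to prove the contrapositive: if $n$ is odd with $n \equiv 3 \pmod{4}$, then $a^n + 1$ is not a sum of two squares. The strategy is to extract the odd part $M$ of $a^n + 1$ and show $M \equiv 3 \pmod{4}$; since squares are $0$ or $1$ modulo $4$, this will force some prime $p \equiv 3 \pmod{4}$ to divide $M$ (and hence $a^n + 1$) to an odd power, and Fermat's two squares theorem will rule out $a^n + 1$.

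First I would record $v_2(a+1)$: from $a+1 = 2^{i+2}(4j+1)$ one reads off $v_2(a+1) = i+2$ with odd cofactor $4j+1 \equiv 1 \pmod{4}$. Next, factor
\[
  a^n + 1 \;=\; (a+1)\, Q_n(a), \qquad Q_n(a) \;=\; a^{n-1} - a^{n-2} + \cdots - a + 1.
\]
The computation inside the proof of Lemma~\ref{general1}, specialized to $b = 4$ (valid since $4 \mid a+1$), yields $Q_n(a) \equiv n \pmod{4}$; in particular $Q_n(a)$ is odd, so $v_2(a^n+1) = i+2$ exactly.

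Writing $a^n + 1 = 2^{i+2} M$ with $M$ odd, we obtain
\[
  M \;=\; \frac{a+1}{2^{i+2}} \cdot Q_n(a) \;\equiv\; 1 \cdot n \;\equiv\; n \pmod{4}.
\]
Thus if $n \equiv 3 \pmod{4}$ then $M \equiv 3 \pmod{4}$, so $M$ cannot be a sum of two squares (the product of its odd prime factors must be $\equiv 3 \pmod{4}$, forcing some prime $p \equiv 3 \pmod{4}$ to appear to an odd power). Since multiplying by $2^{i+2}$ does not change the exponent of any odd prime, $a^n + 1$ also fails Fermat's criterion. Hence $a^n + 1$ can be a sum of two squares only when $n \equiv 1 \pmod{4}$. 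I do not foresee a real obstacle: the whole argument reduces to a two-line modular calculation plus Fermat's two squares theorem, and the only non-trivial ingredient, the congruence $Q_n(a) \equiv n \pmod{4}$, is already contained in the proof of Lemma~\ref{general1}.
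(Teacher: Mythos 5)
Your proof is correct, but it takes a different (and arguably cleaner) route than the paper's own proof of this lemma. The paper argues directly with the binomial theorem: writing $n = 4k+3$ and noting $a \equiv 4\cdot 2^i - 1 \pmod{16\cdot 2^i}$, it computes $a^n + 1 = a^3\cdot(a^4)^k + 1 \equiv 12\cdot 2^i \pmod{16\cdot 2^i}$, whence $\frac{a^n+1}{4\cdot 2^i} \equiv 3 \pmod 4$ and Fermat's criterion fails. You instead factor $a^n+1 = (a+1)Q_n(a)$, use $Q_n(a) \equiv n \pmod 4$ from Lemma~\ref{general1}, and observe that the odd cofactor of $a+1$ is $4j+1 \equiv 1 \pmod 4$, so the odd part of $a^n+1$ is $\equiv n \pmod 4$. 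Both arguments land at the same place (the odd part of $a^n+1$ is $\equiv 3 \pmod 4$ when $n \equiv 3 \pmod 4$), and all your intermediate steps check out: $v_2(a+1) = i+2$, $Q_n(a)$ odd, and an odd number $\equiv 3 \pmod 4$ must have a prime $\equiv 3 \pmod 4$ dividing it to an odd power. What your approach buys is uniformity: it is essentially the same argument the paper uses for the companion Lemma~\ref{4jplus3} (where $a+1 = 4\cdot 2^i(4j+3)$ and the odd cofactor is $\equiv 3 \pmod 4$), so your method proves both lemmas at once — the odd part of $a^n+1$ is congruent to $(4j+\epsilon)\, n \pmod 4$ in either case — whereas the paper handles the two cases by different computations.
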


Note that this covers values of $a$ such as
$a=3$, 7, 15, 19, 31, and 35. This  explains why $35^9+1$ is a sum of two 
squares but $35^3+1$ is not. 

\begin{proof}
Let us argue by contradiction.
Suppose $ n\equiv 3$ mod $4$. Write $n=4k+3$, and 
note that $a \equiv 4\cdot 2^i - 1$ mod $16\cdot 2^i$.
Then, 
making liberal use of the binomial theorem on $a^3 \equiv (4\cdot2^i-1)^3$ and 
$a^4 \equiv (4\cdot 2^i-1)^4$,
we have:
\begin{align*}
    a^n+1 &= a^{4k+3} +1 \\
          &= (a^3)\cdot(a^4)^k+1 \\
          &\equiv \Big(\dots + 3\cdot (4 \cdot 2^i) - 1\Big)\cdot \Big(\dots - 4\cdot (4 \cdot 2^i) + 1\Big)^k+1 \ \ \ \ \mbox{mod 16}\cdot 2^i\\
          &\equiv \Big(3\cdot 4 \cdot 2^i - 1\Big)\cdot (1)^k+1  \ \ \ \   \mbox{mod } 16\cdot 2^i\\
          &\equiv 12\cdot 2^i \ \ \ \ \mbox{mod } 16\cdot 2^i.
\end{align*}

This implies that  $\frac{a^n+1}{4\cdot 2^i}$ is equivalent to $3$ mod $4$. Then there must be at least one prime equivalent to 3 mod 4 that appears in the factorization of $\frac{a^n+1}{4\cdot 2^i}$ an odd number of times. This implies the same for $a^n+1$ and thus by Fermat, $a^n+1$ is not the sum of two squares. This is a contradiction to our assumption and thus $n$ cannot be equivalent to $3$ mod $4$.
\end{proof}

\begin{lem}\label{4jplus3}
For $a = 4\cdot 2^i \cdot (4j+3)-1$ with $i, j \geq 0$, then $a^n+1$ can only be written as the sum of two squares (for $n$ odd) if $n\equiv 3$ mod $4$. 
\end{lem}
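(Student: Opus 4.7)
The plan is to mirror the structure of the proof of Lemma \ref{4jplus1} with the roles of the residue classes $n\equiv 1$ and $n\equiv 3 \pmod 4$ exchanged. We argue by contradiction, assuming $n\equiv 1\pmod 4$ (the case to be ruled out), and show that $a^n+1$ is forced into a shape that violates Fermat's two squares theorem.

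First I would reduce $a$ modulo $16\cdot 2^i$. From $a+1 = 4\cdot 2^i(4j+3) = 16\cdot 2^i\cdot j + 12\cdot 2^i$, one reads off $a \equiv 12\cdot 2^i - 1 \pmod{16\cdot 2^i}$. Note how this differs from the Lemma \ref{4jplus1} setup, where $a\equiv 4\cdot 2^i - 1$: there, the bad residue of $n$ was $3$ because one needed the extra factor of $3$ supplied by cubing before hitting $12\cdot 2^i$; here $12\cdot 2^i$ is already present at the first power, which is why $n\equiv 1\pmod 4$ is now the problematic case.

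Next, using the binomial theorem I would establish $a^4 \equiv 1 \pmod{16\cdot 2^i}$. Concretely, $(12\cdot 2^i)^2 = 144\cdot 4^i$ is divisible by $16\cdot 2^i$ for every $i\geq 0$, and $-24\cdot 2^i \equiv 8\cdot 2^i \pmod{16\cdot 2^i}$, so $a^2 \equiv 1 + 8\cdot 2^i$; squaring once more annihilates the remaining term, giving $a^4 \equiv 1$. Writing $n = 4k+1$, we then obtain
\[
a^n + 1 \;=\; a\cdot (a^4)^k + 1 \;\equiv\; a + 1 \;\equiv\; 12\cdot 2^i \pmod{16\cdot 2^i}.
\]
Therefore $\dfrac{a^n+1}{4\cdot 2^i} \equiv 3 \pmod 4$; this quotient is odd, so by Fermat's two squares theorem some prime $p\equiv 3\pmod 4$ divides it (and hence $a^n+1$) to an odd power, contradicting the assumption that $a^n+1$ is a sum of two squares.

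I do not anticipate a significant obstacle: the argument is a transparent modification of the binomial expansion used in Lemma \ref{4jplus1}. The only point worth double-checking is that the congruences behave correctly at $i=0$, where the modulus $16\cdot 2^i = 16$ is smallest; since $144$ and $64$ are both multiples of $16$, no degeneracy occurs and the proof goes through uniformly for all $i\geq 0$ and $j\geq 0$.
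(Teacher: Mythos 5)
Your proof is correct, but it takes a different route from the paper's proof of this particular lemma. The paper does \emph{not} mirror the mod-$16\cdot 2^i$ binomial computation of Lemma~\ref{4jplus1} here; instead it factors
\[
a^n+1=(a+1)\bigl(a^{n-1}-a^{n-2}+\cdots+1\bigr)=4\cdot 2^i(4j+3)\bigl(a^{n-1}-a^{n-2}+\cdots+1\bigr)
\]
and observes that, since $a\equiv -1\pmod 4$, the alternating cofactor is $\equiv n\pmod 4$; hence $\frac{a^n+1}{4\cdot 2^i}\equiv (4j+3)\,n\equiv 3n\pmod 4$, which is $1\bmod 4$ only when $n\equiv 3\pmod 4$. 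Your argument instead reduces $a$ to $12\cdot 2^i-1$ modulo $16\cdot 2^i$, verifies $a^4\equiv 1$ there, and deduces $a^n+1\equiv a+1\equiv 12\cdot 2^i$ for $n\equiv 1\pmod 4$; the congruence checks ($144\cdot 4^i$ and $64\cdot 4^i$ divisible by $16\cdot 2^i$, $-24\cdot 2^i\equiv 8\cdot 2^i$) all hold for every $i\ge 0$, and the Fermat-theorem conclusion from $\frac{a^n+1}{4\cdot 2^i}\equiv 3\pmod 4$ with odd quotient is sound. What each buys: yours is uniform with the paper's proof of Lemma~\ref{4jplus1}, so the two companion lemmas get a single template; the paper's is shorter and works only modulo $4$ on the cofactor, needing nothing beyond $a\equiv-1\pmod 4$ and the exact power of $2$ in $a+1$. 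Both are complete proofs.
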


Note that this covers values of $a$ such as
$a=11$, 23, 27, 43, and so on, including 
191 which gives us two values $n=3$ and $n=15$ such that $191^n+1$ is the sum of two squares. Both $3$ and $15$, of course,
are equivalent to 3 mod 4. 

\begin{proof}
Keeping in mind that $a \equiv -1$ mod 4,
we have:
\begin{align*}
    a^n+1 &= (a+1)\cdot(a^{n-1}-a^{n-2}+ \cdots +1)\\
    &=4\cdot 2^i \cdot(4j+3)\cdot(a^{n-1}-a^{n-2}+ \cdots +1)
\end{align*}
Since $a \equiv -1$ mod 4, then that last expression, 
$(a^{n-1}-a^{n-2}+ \cdots +1)$, is equivalent to $n$ mod 4. The only hope, then,
for $a^n+1$ to be the sum of two squares is for $n$ to be 3 mod 4, as then
$\frac{a^n+1}{4\cdot 2^i}$ will be the product of two expressions both equivalent to
3 mod 4, resulting in 
$\frac{a^n+1}{4\cdot 2^i}$ being equivalent to 1 mod 4.
\end{proof}

The last two lemmas allow us to now prove one of our earlier lemmas:

\begin{proof}[Proof of Lemma ~\ref{oddlem1}]
For $a \equiv 3 \pmod{4}$, we can write $a=4K-1$, where $K$ can be split into
an even part (which we write as $2^i$) and an odd part (which we write as either 
$4j+1$ or $4j+3$). In the first case, $a+1$ 
equals $4\cdot 2^i \cdot (4j+1)$ and
since $m$ is the smallest integer such that $\frac{a+1}m$ is the sum of two squares, 
then $m$ must be equivalent to $1 \pmod{4}$, and by 
Lemma ~\ref{4jplus1} we have 
$n \equiv 1 \pmod{4}$ in this case, and so 
$n \equiv m \pmod{4}$. A similar argument applies to the second case. 
\end{proof}


This lemma places further restrictions on $n$. Recall that $m$ is
the smallest positive integer so that $\frac{a+1}{m}$ is the sum of
two squares.

\begin{lem}
\label{oddlem2}
Let $a\equiv 3\pmod{4}.$ If $a^n+1$ is the sum of two squares, then for all primes $p\equiv 3 \pmod{4}$ such that $p^{e}||a+1,$  $e$ odd, we have $p^{k}||n,$ $k$  odd. In particular, if $a^n+1$ is the sum of two squares, then $m|n.$
\end{lem}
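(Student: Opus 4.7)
\textbf{Proof proposal for Lemma~\ref{oddlem2}.} My plan is to pin down the exact $p$-adic valuation of $a^n+1$ for each bad prime $p$, using the cyclotomic decomposition that is already the workhorse of the paper, and then read off the divisibility conclusion from Fermat's two squares theorem.

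Fix a prime $p\equiv 3\pmod{4}$ with $p^{e}\|a+1$ and $e$ odd. Since $a\equiv -1\pmod{p}$ and $p$ is odd, $\ord_{p}(a)=2$. I would now apply Lemma~\ref{general3} (with the roles played by $a+1$, i.e.\ exponent $1$ in that lemma's notation): writing $n=cp^{k}$ with $\gcd(c,p)=1$, Lemma~\ref{general3} yields
\[
  p^{e+k}\;\|\;a^{n}+1.
\]
If I preferred to do this by hand rather than invoke Lemma~\ref{general3}, I would just use $a^{n}+1=\prod_{\delta\mid n}\Phi_{2\delta}(a)$ together with Theorem~\ref{cyclothm}: the condition $p\mid \Phi_{2\delta}(a)$ forces $2\delta=2p^{j}$, so the only cyclotomic factors contributing a $p$ are $\Phi_{2}(a)=a+1$ (contributing $p^{e}$) and $\Phi_{2p^{j}}(a)$ for $1\leq j\leq k$ (each contributing exactly one copy of $p$).

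Now suppose $a^{n}+1$ is a sum of two squares. Since $p\equiv 3\pmod{4}$, Fermat's two squares theorem forces $v_{p}(a^{n}+1)=e+k$ to be even; combined with $e$ odd this gives $k$ odd, which is the first assertion (and in particular $k\geq 1$, so $p\mid n$). For the ``in particular'' clause I would unpack the definition of $m$: by Fermat's theorem, $(a+1)/m$ is a sum of two squares exactly when every prime $q\equiv 3\pmod{4}$ divides $(a+1)/m$ to an even power, and the minimal $m$ achieving this is the squarefree product
\[
  m=\prod_{\substack{p\equiv 3\pmod{4}\\ v_{p}(a+1)\ \text{odd}}}p.
\]
The first part of the lemma shows each such $p$ divides $n$, and since $m$ is squarefree this gives $m\mid n$.

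I do not anticipate a real obstacle here; the lemma is really a valuation-tracking argument and all of the heavy machinery (Theorem~\ref{cyclothm} and Lemma~\ref{general3}) has been set up precisely for this. The only subtle point to articulate cleanly is that $m$, being defined as the minimal denominator, is automatically squarefree and supported on primes $\equiv 3\pmod 4$ with odd $(a+1)$-valuation, so that establishing $p\mid n$ for each such $p$ is enough to conclude $m\mid n$.
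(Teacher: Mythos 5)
Your proposal is correct and follows essentially the same route as the paper: both apply Lemma~\ref{general3} (with base exponent $1$) to get $p^{e+k}\,\|\,a^{n}+1$ where $p^{k}\,\|\,n$, deduce that $e+k$ must be even and hence $k$ odd, and then observe that $m$ is exactly the squarefree product of the primes $p\equiv 3\pmod 4$ dividing $a+1$ to an odd power, so $m\mid n$. Your explicit unpacking of why $m$ has that form, and the optional by-hand cyclotomic derivation, are just more detailed versions of steps the paper states tersely.
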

\begin{proof}
Let $a^n+1$ be the sum of two squares and suppose $p^{e}||a+1, e$ odd, and $p\equiv 3\pmod{4}.$ 
Select $k$ such that $p^{k} || n$. Then, 
Lemma ~\ref{general3} implies that $p^{e+k} || a^{n}+1$.
Since $a^n+1$ is the sum of two squares, we know $e+k$ is even, which makes $k$ odd.
It follows that since $m=\prod p$
for $p$ such primes of this type, then if $a^n+1$ is the sum of two squares, then $m|n.$ 
\end{proof}

We will now prove Theorem ~\ref{3mod4}, which applies to all $a\equiv 3\pmod{4}.$

\begin{proof}[Proof of Theorem ~\ref{3mod4}]
  First we will prove that $\frac{n}{m}$ is the sum of two
  squares. Suppose that $a^{n} + 1$ is the sum of two squares and
  recall that by Lemma~\ref{oddlem2} that $m | n$. Assume by
  contradiction that $\frac{n}{m}$ is not the sum of two squares. Then
  let $q$ be the greatest prime such that $q\equiv 3\pmod{4}$ and
  $q^j\parallel \frac{n}{m}, \ j $ odd. If $q|m$, then
  Lemma~\ref{general3} implies that an even power of $q$ divides
  $a^{m} + 1$, and so if an odd power of $q$ divides $a^{n} + 1$, then
  $q^r\parallel n,\ r$ odd. But $m$ is squarefree, so $q \parallel m.$
  Then $q^{r-1}\parallel \frac{n}{m}, \ r-1$ even, which is a
  contradiction. Therefore we can assume $q\nmid m,$ so
  $q^j\parallel n.$

We know that $\Phi_{2q^{j}}(a)$ divides $a^{n} + 1$. We have that
$\Phi_{2q^j}(a)\equiv\Phi_{2q^j}(-1)\equiv \Phi_{q^j}(1)\equiv q
\equiv 3 \pmod{4}.$
This implies that there exists a prime $p\equiv 3\pmod{4}$ such that
$p^k\parallel \Phi_{2q^j}(a), \ k$ odd. We can consider two cases:
when $p\neq q,$ and when $p=q.$

Suppose $p\neq q.$ Then $p\nmid q^j,$ so $\ord_p(a)=2q^j,$ which implies $p>q.$  Since $a^n+1$ is the sum of two squares, Lemma ~\ref{general3} implies that $p^l\parallel n, \ l$ odd. Since $\ord_p(a)>2,$ $p\nmid a+1,$ so $p\nmid m.$ Then $p$ is a prime congruent to $3\pmod{4}$ that divides $\frac{n}{m}$ to an  odd power, and $p>q$, which is a contradiction because we assume $q$ is the largest such prime.

Now suppose $p=q$. Since $p | \Phi_{2p^{j}}(a)$ it follows that
$a^{p^{j}} + 1 \equiv 0 \pmod{p}$. Repeatedly applying Fermat's little theorem,
that $a^{p} \equiv a \pmod{p}$, we find that $p | a+1$.
Since $p \nmid m$, $p^{k} \| a+1$, $k$ even. Then Lemma ~\ref{general3} implies that $p^{k+j}\parallel a^{n}+1,$ where $k+j$ is odd, which is a contradiction. Thus if $a^n+1$ is the sum of two squares, then $\frac{n}{m}$ is also the sum of two squares.  

Next we'll prove that $a^m+1$ is the sum of two squares. Suppose
$a^n+1$ is the sum of two squares, where $n=ms,$ and assume by
contradiction that $a^m+1$ is not the sum of two squares. Then there
exists some prime $q\equiv 3\pmod{4}$ such that $q^j||a^m+1, \ j$
odd. Since $s=\frac{n}{m}$ is the sum of two squares, we know
$q^k||s, \ k$ even. Then $n=mq^ks',$ where $\gcd(s',q)=1,$ so
$q^{k+j}||a^n+1, \ k+j$ odd (Lemma ~\ref{general3}). This is a
contradiction because we assumed $a^n+1$ is the sum of two
squares. Therefore if $a^n+1$ is the sum of two squares for some odd $n$, then $a^m+1$ is also the sum of two squares.

Let $\delta \mid \frac{n}{m},$ where $\delta$ is the sum of two
squares, and suppose $a^n+1$ is the sum of two squares. We will show
that $a^{m \delta }+1$ is the sum of two squares. Assume by
contradiction that there exists a prime $q\equiv3\pmod{4} $ such that
$q^j\parallel a^{m \delta}+1, \ j$ odd. 

Since $\delta$ is the sum of two squares, we know $q^k\parallel \delta, \ k$ even, $k\geq0.$ Because $q$ must divide $a^{n} + 1$ to an even power,
Lemma~\ref{general3} implies that $q^l\parallel \frac{n}{m\delta}, \ l$ odd, so  $q^{l+k}\parallel \frac{n}{m},\ l+k$ odd, which is a contradiction because $\frac{n}{m}$ is the sum of two squares. Thus if $a^n+1$ is the sum of two squares, $a^{m\delta}+1$ is the sum of two squares for all $\delta\mid \frac{n}{m}$ such that $\delta$ is the sum of two squares.  

Finally, we will show that if $a^{np^2} + 1$ is the sum of two squares for
some $p \equiv 3 \pmod{4}$, then $p| a^n + 1$. By Lemma ~\ref{oddlem1} 
we know $a^{np}+1$ is not the sum of two squares, so there exists some 
$q\equiv 3\pmod{4}$  with  $q^j || a^{np} + 1, \ j $ odd.
If $q \not= p$, then by Lemma~\ref{general3} we have
$q^j || a^{np^2} + 1, \ j $ odd, which contradicts $a^{np^2}+1$ being the sum of two squares. Hence $q=p$, and since $p|a^{np}+1$ and $a^{np} \equiv a^n \pmod{p}$, 
we have that $p|a^n+1$, as desired. 
\end{proof}


We conclude this section with a heuristic giving evidence for Conjecture~\ref{bigconj}. Suppose first
that $a \equiv 0 \text{ or } 1$ mod $4$. In this case,
if $a^{n} + 1$ is the sum of two squares for any $n$,
then $a+1$ is the sum of two squares. Let $A_{p}$
be the event that $\Phi_{2p}(a)$ is the sum of two squares.
It seems plausible that the probability that this even
occurs is $\approx \frac{K}{\sqrt{\ln(\Phi_{2p}(a))}}
\approx \frac{K}{\sqrt{p}}$. Since $\sum_{p \equiv 1 \pmod{4}} \frac{1}{\sqrt{p}}$ diverges, we should
expect an infinite number of the events $A_{p}$ to occur,
and this would yield infinitely many primes $p$ for which
$a^{p} + 1$ is the sum of two squares.

If $a \equiv 2 \pmod{4}$, then Theorem~\ref{even} implies
there is at most one $n$ so that $a^{n} + 1$ is the sum
of two squares.

In the case that $a \equiv 3 \pmod{4}$, let
$m$ denote the smallest positive integer so that
$\frac{a+1}{m}$ is the sum of two squares. First,
consider primes $p \equiv 1 \pmod{4}$ so that
$a^{mp} + 1$ is the sum of two squares. We have
\[
\frac{a^{mp}+1}{a^{m} + 1} = \prod_{\substack{d | 2mp \\ d \nmid 2m}} \Phi_{d}(a).
\]
Theorem~\ref{cyclothm} implies that if we write
$\Phi_{d}(a) = \gcd(\Phi_{d}(a),m) c_{d}$,
then the $c_{d}$ are pairwise coprime and this implies
that $c_{d}$ is the sum of two squares for all $d$.
It seems plausible that the $c_{d}$ being the sum of two
squares are independent, and so the probability that
$a^{mp} + 1$ is the sum of two squares is $\approx \prod_{d} \frac{1}{\sqrt{\ln(c_{d})}} \approx p^{-\tau(m)/2}$, where $\tau(m)$ is the number of divisors
of $m$. This sum diverges if $m = 1$ or $m$ is prime, and converges if $m$ is composite. In particular, in the case that $m$ is composite,
there are only finitely many primes $p$ so that $a^{mp} + 1$ is the sum of
two squares.

Then, Theorem~\ref{3mod4} then implies that there are only finitely
many primes that can divide some number $n$ so that $a^{n} + 1$ is the
sum of two squares. If there are infinitely many $n$ so that $a^{n} + 1$
is the sum of two squares, it follows then that there is a prime
$p$ so that $a^{p^{r}} + 1$ is the sum of two squares for infinitely many
$r$. We have that $a^{p^{r}} + 1 = \prod_{i=0}^{r} \Phi_{2p^{i}}(a)$.
If we write $r_{i} = \frac{\Phi_{2p^{i}}(a)}{\gcd(\Phi_{2p^{i}}(a),p)}$,
then Theorem~\ref{cyclothm} implies that $\gcd(r_{i},r_{j}) = 1$.
It follows from this that $r_{i}$ is the sum of two squares for all $i \geq 1$.
Assuming that these events are independent, the probability this
occurs is $\sum_{i} \frac{K}{\sqrt{\log(r_{i})}}$. But this
sum converges. Therefore the ``probability is zero'' that
there are infinitely many $n$ so that $a^{n} + 1$ is the sum of two
squares in the case when $a \equiv 3 \pmod{4}$ and $m$ is composite.

As an example, we consider $a = 4713575$, with a composite $m$ value
of $m=21$.  We conjecture that there are finitely many $n$ so that
$a^{n} + 1$ is the sum of two squares. So far, we know only of $n=21$
and $n=105$. 


\section{$p\equiv1\pmod{4}$}\label{p1mod4}
The previous theorems put constraints on when $a^n+1$ can be the sum of two squares for different categories of $a.$ The following proof of Theorem ~\ref{px2} uses Aurifeuillian factorization to show that when $a=pv^2,$ where $p\equiv 1 \pmod{4}$ is a prime and  $p\nmid v,$ there are either zero or infinitely many odd integers $n$ such that $a^n+1$ is the sum of two squares. 

\begin{proof}[Proof of Theorem ~\ref{px2}]
Let $a=pv^2,$ where $p\equiv 1 \pmod{4}$ is prime and $p\nmid v.$ Suppose $a^n+1$ is the sum of two squares and consider:
\begin{align*}
    a^{np}+1=\prod_{\delta|n}\Phi_{2\delta}(a)\prod_{\substack{\delta |np \\ \delta \nmid n}}\Phi_{2\delta}(a).
\end{align*}

We know $\displaystyle\prod_{\delta|n}\Phi_{2\delta}(a)=a^n+1$ is the sum of two squares. Consider the Aurifeuillian factorization of $\Phi_{2\delta}(a),$ where $\delta|np, \delta \nmid n,$ $x=-kv^2,\ k=-p\equiv 3 \pmod{4},$ and $q$ is odd:
\begin{align*}
    \Phi_{2\delta }(x)&=\big(F(x)\big)^2-kx^q\big(G(x)\big)^2\\
    \Phi_{2\delta }(-kv^2)&=\big(F(-kv^2)\big)^2-k(-kv^2)^q\big(G(-kv^2)\big)^2\\
    &=\big(F(-kv^2)\big)^2+k^{q+1}v^{2q}\big(G(-kv^2)\big)^2\\
    &=\big(F(-kv^2)\big)^2+\big(k^{\frac{q+1}{2}}v^{q}G(-kv^2)\big)^2\\
    &=\Phi_{2\delta}(a).
\end{align*}
Therefore $\Phi_{2\delta}(a)$ is the sum of two squares for any $\delta |n.$ Thus $a^{np}+1$ is the sum of two squares.
Conversely, suppose that $a^{np}+1$ is the sum of two squares. Then we can see again that $\Phi_{2\delta p}(a)$ is the sum of two squares for any factor $\delta.$ This implies that $\displaystyle \prod_{\delta |n}\Phi_{2\delta }(a)=a^n+1$ is the sum of two squares.
\end{proof}

Now, we will construct an infinite family of
number $a = f(X)$ so that $a^{p} + 1$ is the sum of
two squares.

\begin{proof}[Proof of Theorem~\ref{poly}]
Suppose $p\equiv 1\pmod{4}$, then there exists an even integer $u$  and an odd integer $v$ such that $p=u^{2}+v^{2}$. Then consider the following polynomials:
\begin{eqnarray*}
A(X)& =& \frac{u}{2}pX^{2}+vX,\\
B(X)& =& \frac{u^{2}}{2}pX^{2}-1, \text{ and} \\
C(X)& =& \frac{uv}{2}pX^{2}+pX.
\end{eqnarray*}

Let $f(X)=pA(X)^{2}$, then we have
\begin{eqnarray*}
f(X)^{p}+1 &=& (f(X)+1)\Phi_{2p}(f(X)) \\
&=&(pA(X)^{2}+1)\Phi_{2p}(pA(X)^{2}).
\end{eqnarray*}

It is straightforward to check that $f(X)+1$ can be written as the sum of two squares: $pA(X)^2+1=B(X)^2+C(X)^2.$ 
Then consider the Aurifeuillian factorization of 
$\Phi_{2p}(x)$, where we let $k=-p$ and $x=pA(X)^2$, then we get the following: 
\begin{eqnarray*}
\Phi_{2p}(x)&=&F(x)^{2}-kxG(x)^{2}\\
\Phi_{2p}\left(pA(X)^{2}\right)&=&
\left(F\left(pA(X)^2\right)\right)^{2}
-p\left(-pA(X)^2\right)
\left(G\left(pA(X)^2\right)\right)^{2}\\
&=&\left(F\left(pA(X)^2\right)\right)^{2}+
\left(p^{2}A(X)^2\right)\left(\left( G(pA(X)^2\right)\right)^{2}\\
&=&\left(F\left(pA(X)^2\right)\right)^{2}+
\left(pA(X)\left(G\left(pA(X)^2\right)\right)\right)^{2}\\
&=&F(x)^{2}+\left(-kxG(x)\right)^{2}
\end{eqnarray*}

Therefore, $\Phi_{2p}\left(f(X)\right)$ can be written as the sum of two squares as well. This implies that $f(X)^{p}+1$ is the product of two terms, each of which can be written as the sum of two squares. 
\end{proof}

\newpage
\section{Chart}
Here is a chart that illustrates the first few odd integers $n$ such that $a^n+1$ is the sum of two squares for all integers $a\in[1,50].$

\begin{center}
 \begin{tabular}{||c c c || c c c||} 
 \hline
 \ \ a \ \ \ \ \  & n \ \ \ \ \ &  Property & \ \ a \ \ \ \ \  & n \ \ \ \ \ &  Property\\ [1.2 ex] 
 \hline  
  \ \ 1 \ \ \ \ \  & all \ \ \ \ \  & Thm ~\ref{perfect} & \ \ 26 \ \ \ \ \  & -  \ \ \ \ \ & Thm ~\ref{even}\\ [1.2 ex]              

 \ \ 2 \ \ \ \ \  & 3 \ \ \ \ \  & Thm ~\ref{even} & \ \ 27 \ \ \ \ \  & -  \ \ \ \ \ & Thm ~\ref{3mod4} \\ [1.2 ex]              

 \ \ 3 \ \ \ \ \  & 1, 5, 13, 65,\ldots \ \ \ \ \  & Thm ~\ref{3mod4} & \ \ 28 \ \ \ \ \  & 1, 3, 11, 19,\ldots  \ \ \ \ \ & Thm ~\ref{even}\\ [1.2 ex]              

 \ \ 4 \ \ \ \ \  & all \ \ \ \ \  & Thm ~\ref{perfect} & \ \ 29 \ \ \ \ \  & -  \ \ \ \ \ & Thm ~\ref{5} \\ [1.2 ex]              
   
 \ \ 5 \ \ \ \ \  & -  \ \ \ \ \ & Thm ~\ref{5} & \ \ 30 \ \ \ \ \  & 31  \ \ \ \ \ & Thm ~\ref{even}\\ [1.2 ex]              
   
  \ \ 6 \ \ \ \ \  & 7  \ \ \ \ \ & Thm ~\ref{even} & \ \ 31 \ \ \ \ \  & 1, 5, 25, 41,\ldots  \ \ \ \ \ & Thm ~\ref{3mod4}\\ [1.2 ex]              
   
   \ \ 7 \ \ \ \ \  & 1, 13, 17, 29,\ldots  \ \ \ \ \ & Thm ~\ref{3mod4} & \ \ 32 \ \ \ \ \  & -  \ \ \ \ \ & Thm ~\ref{even}\\ [1.2 ex]              
   
   \ \ 8 \ \ \ \ \  & 1  \ \ \ \ \ & Thm ~\ref{even} & \ \ 33 \ \ \ \ \  & 1, 5, 7, 17,\ldots  \ \ \ \ \ & Thm ~\ref{1mod8} \\ [1.2 ex]              
   
   \ \ 9 \ \ \ \ \  & all  \ \ \ \ \ & Thm ~\ref{perfect} & \ \ 34 \ \ \ \ \  & -  \ \ \ \ \ & Thm ~\ref{even}\\ [1.2 ex]              
   
   \ \ 10 \ \ \ \ \  & -  \ \ \ \ \ & Thm ~\ref{even} & \ \ 35 \ \ \ \ \  & 1, 9, 13, 29,\ldots  \ \ \ \ \ & Thm ~\ref{3mod4}\\ [1.2 ex]              
   
   \ \ 11 \ \ \ \ \  & 3, 159,\ldots  \ \ \ \ \ & Thm ~\ref{3mod4} &\ \ 36 \ \ \ \ \  & all  \ \ \ \ \ & Thm ~\ref{perfect}\\ [1.2 ex]              
   
   \ \ 12 \ \ \ \ \  & 1, 5, 11, 23,\ldots  \ \ \ \ \ & Thm ~\ref{even} & \ \ 37 \ \ \ \ \  & -  \ \ \ \ \ & Thm ~\ref{5}\\ [1.2 ex]              
   
   \ \ 13 \ \ \ \ \  & -  \ \ \ \ \ & Thm ~\ref{5} & \ \ 38 \ \ \ \ \  & -  \ \ \ \ \ & Thm ~\ref{even}\\ [1.2 ex]              
   
    \ \ 14 \ \ \ \ \  & 3  \ \ \ \ \ & Thm ~\ref{even} & \ \ 39 \ \ \ \ \  & 1, 13, 37, 61,\ldots  \ \ \ \ \ & Thm ~\ref{3mod4}\\ [1.2 ex]              
   
    \ \ 15 \ \ \ \ \  & 1, 29, 89, 97,\ldots  \ \ \ \ \ & Thm ~\ref{3mod4} & \ \ 40 \ \ \ \ \  & 1, 5, 13, 53,\ldots  \ \ \ \ \ & Thm ~\ref{even}\\ [1.2 ex]              
   
    \ \ 16 \ \ \ \ \  & all  \ \ \ \ \ & Thm ~\ref{perfect} & \ \ 41 \ \ \ \ \  & -  \ \ \ \ \ & Thm ~\ref{1mod8}\\ [1.2 ex]              
   
    \ \ 17 \ \ \ \ \  & 1, 7, 17, 23,\ldots  \ \ \ \ \ & Thm ~\ref{px2} & \ \ 42 \ \ \ \ \  & -  \ \ \ \ \ & Thm ~\ref{even}\\ [1.2 ex]              
   
    \ \ 18 \ \ \ \ \  & 19  \ \ \ \ \ & Thm ~\ref{even} & \ \ 43 \ \ \ \ \  & -  \ \ \ \ \ & Thm ~\ref{3mod4}\\ [1.2 ex]              
   
    \ \ 19 \ \ \ \ \  & 1, 17, 29, 37,\ldots  \ \ \ \ \ & Thm ~\ref{3mod4} & \ \ 44 \ \ \ \ \  & 1, 5, 7, 17,\ldots  \ \ \ \ \ & Thm ~\ref{even}\\ [1.2 ex]              
   
    \ \ 20 \ \ \ \ \  & -  \ \ \ \ \ & Thm ~\ref{even} & \ \ 45 \ \ \ \ \  & -  \ \ \ \ \ & Thm ~\ref{5}\\ [1.2 ex]              
   
    \ \ 21 \ \ \ \ \  & -  \ \ \ \ \ & Thm ~\ref{5} & \ \ 46 \ \ \ \ \  & -  \ \ \ \ \ & Thm ~\ref{even}\\ [1.2 ex]              
   
    \ \ 22 \ \ \ \ \  & -  \ \ \ \ \ & Thm ~\ref{even} &  \ \ 47 \ \ \ \ \  & -  \ \ \ \ \ & Thm ~\ref{3mod4}\\ [1.2 ex]              
   
    \ \ 23 \ \ \ \ \  & 3, 123,\ldots  \ \ \ \ \ & Thm ~\ref{3mod4} & \ \ 48 \ \ \ \ \  & 1, 3, 5, 17,\ldots  \ \ \ \ \ & Thm ~\ref{even}\\ [1.2 ex]              
   
    \ \ 24 \ \ \ \ \  &  1, 7, 11, 19,\ldots \ \ \ \ \ & Thm ~\ref{even} & \ \ 49 \ \ \ \ \  & all  \ \ \ \ \ & Thm ~\ref{perfect}\\ [1.2 ex]              
   
    \ \ 25 \ \ \ \ \  & all  \ \ \ \ \ & Thm ~\ref{perfect} & \ \ 50 \ \ \ \ \  & -  \ \ \ \ \ & Thm ~\ref{even}\\ [1ex] 
    \hline

 \end{tabular}
\end{center}
\newpage

\bibliographystyle{plain}
\bibliography{refs}

\begin{thebibliography}{10}

\bibitem{Wagstaff1}
John Brillhart, D.~H. Lehmer, J.~L. Selfridge, Bryant Tuckerman, and S.~S.
  Wagstaff, Jr.
\newblock {\em Factorizations of {$b^n \pm 1$},$b=2,3,5,6,7,10,11,12$ up to
  high powers}.
\newblock American Mathematical Society, Providence, RI, third edition, 2002.

\bibitem{BMS}
Yann Bugeaud, Maurice Mignotte, and Samir Siksek.
\newblock Classical and modular approaches to exponential {D}iophantine
  equations. {I}. {F}ibonacci and {L}ucas perfect powers.
\newblock {\em Ann. of Math. (2)}, 163(3):969--1018, 2006.

\bibitem{Cox}
David~A. Cox.
\newblock {\em Primes of the form {$x^2 + ny^2$}}.
\newblock A Wiley-Interscience Publication. John Wiley \& Sons, Inc., New York,
  1989.
\newblock Fermat, class field theory and complex multiplication.

\bibitem{Curtis}
Keenan Curtis.
\newblock Sums of two squares: an analysis of numbers of the form $2^{n} + 1$
  and $3^{n} + 1$.
\newblock Preprint.

\bibitem{HardyAndWright}
G.~H. Hardy and E.~M. Wright.
\newblock {\em An introduction to the theory of numbers}.
\newblock The Clarendon Press, Oxford University Press, New York, fifth
  edition, 1979.

\bibitem{Iwaniec}
H.~Iwaniec.
\newblock Primes of the type {$\phi(x,\,y)+A$} where {$\phi$} is a quadratic
  form.
\newblock {\em Acta Arith.}, 21:203--234, 1972.

\bibitem{Landau}
Edmund Landau.
\newblock {\"U}ber die {E}inteilung der positiven ganzen {Z}ahlen in vier
  {K}lassen nach der {M}indeszahl der zu ihrer additiven {Z}usammensetzung
  erforderlichen quadrate.
\newblock {\em Arch. Math. Phys.}, 13:305--312, 1908.

\bibitem{Lehmer}
D.~H. Lehmer.
\newblock Some properties of the cyclotomic polynomial.
\newblock {\em J. Math. Anal. Appl.}, 15:105--117, 1966.

\bibitem{Luneburg}
Heinz L{\"u}neburg.
\newblock Ein einfacher {B}eweis f\"ur den {S}atz von {Z}sigmondy \"uber
  primitive {P}rimteiler von {$A^{N}-1$}.
\newblock In {\em Geometries and groups ({B}erlin, 1981)}, volume 893 of {\em
  Lecture Notes in Math.}, pages 219--222. Springer, Berlin-New York, 1981.

\bibitem{Nagell}
Trygve Nagell.
\newblock L{\o}sning till oppgave nr 2.
\newblock {\em Norsk Mat. Tidsskr.}, 30:62--64, 1948.

\bibitem{Roitman}
Moshe Roitman.
\newblock On {Z}sigmondy primes.
\newblock {\em Proc. Amer. Math. Soc.}, 125(7):1913--1919, 1997.

\bibitem{Stevenhagen}
Peter Stevenhagen.
\newblock On {A}urifeuillian factorizations.
\newblock {\em Nederl. Akad. Wetensch. Indag. Math.}, 49(4):451--468, 1987.

\bibitem{StewartTall}
Ian Stewart and David Tall.
\newblock {\em Algebraic number theory and {F}ermat's last theorem}.
\newblock A K Peters, Ltd., Natick, MA, third edition, 2002.

\bibitem{Wagstaff2}
S.S. Wagstaff, Jr.
\newblock The {C}unningham {P}roject.
\newblock \url{http://homes.cerias.purdue.edu/~ssw/cun/index.html}.

\end{thebibliography}

\end{document}